\documentclass[preprint,nopreprintline,12pt]{elsarticle}

\usepackage{amssymb}
\usepackage{amsmath}
\allowdisplaybreaks
\usepackage{amsthm}

\usepackage{ifthen}

\usepackage{appendix}

\usepackage{nicematrix}
\usepackage{array}
\usepackage{multirow}

\usepackage{titlesec}

\titleformat{\section}
  {\normalfont\bfseries}
  {\thesection}{1em}{}        

\titleformat{\subsection}
  {\normalfont}
  {\thesubsection}{1em}{}

\titleformat{\subsubsection}
  {\normalfont}
  {\thesubsubsection}{1em}{}

\numberwithin{equation}{section}

\newtheorem{Definition}{Definition}[section]

\newtheorem{Theorem}[Definition]{Theorem}
\newtheorem{Proposition}[Definition]{Proposition}

\newtheorem{Corollary}[Definition]{Corollary}

\newtheorem{Fact}[Definition]{Fact}

\newcounter{arxiv}
\ifthenelse{\value{arxiv} = 0}{
\input 00-01_specific_journal_20260130a.tex
} {
\usepackage{hyperref}
}

\begin{document}

\begin{frontmatter}

%% Title, authors and addresses

%% use the tnoteref command within \title for footnotes;
%% use the tnotetext command for theassociated footnote;
%% use the fnref command within \author or \affiliation for footnotes;
%% use the fntext command for theassociated footnote;
%% use the corref command within \author for corresponding author footnotes;
%% use the cortext command for theassociated footnote;
%% use the ead command for the email address,
%% and the form \ead[url] for the home page:
%% \title{Title\tnoteref{label1}}
%% \tnotetext[label1]{}
%% \author{Name\corref{cor1}\fnref{label2}}
%% \ead{email address}
%% \ead[url]{home page}
%% \fntext[label2]{}
%% \cortext[cor1]{}
%% \affiliation{organization={},
%%             addressline={},
%%             city={},
%%             postcode={},
%%             state={},
%%             country={}}
%% \fntext[label3]{}

\title{Unique direct-sum decompositions of
infinite-dimensional unitary representations of the infinite dihedral group via real CMV matrices}

%% use optional labels to link authors explicitly to addresses:
%% \author[label1,label2]{}
%% \affiliation[label1]{organization={},
%%             addressline={},
%%             city={},
%%             postcode={},
%%             state={},
%%             country={}}
%%
%% \affiliation[label2]{organization={},
%%             addressline={},
%%             city={},
%%             postcode={},
%%             state={},
%%             country={}}

\author[melco,hu]{Yuki Fujii\corref{cor1}} %% Author name
\ead{Fujii.Yuki@df.MitsubishiElectric.co.jp}
\cortext[cor1]{Corresponding author.}

%% Author affiliation
\affiliation[melco]{organization={Mitsubishi Electric Corporation, Information Technology R\&D Center},%Department and Organization
            addressline={5-1-1 Ofuna}, 
            city={Kamakura-shi},
            postcode={247-8501}, 
            state={Kanagawa},
            country={Japan}}
\affiliation[hu]{organization={Graduate School of Advanced Science and Engineering, Hiroshima University},%Department and Organization
            addressline={1-3-1 Kagamiyama}, 
            city={Higashi-Hiroshima City},
            postcode={739-8526}, 
            state={Hiroshima},
            country={Japan}}

%% Abstract
\begin{abstract}
Unitary representations of the infinite dihedral group arise naturally and
play an important role in a wide range of areas including mathematics, physics,
and engineering.
CMV matrices, introduced by Cantero, Moral, and Vel\'azquez, are five-diagonal
unitary matrices that are standard tools in the theory of orthogonal
polynomials on the unit circle.

It is known that every unitary representation of the infinite dihedral group on
a separable Hilbert space admits a discrete direct-sum decomposition into
representations arising from the natural factorizations of real CMV matrices
and copies of two exceptional one-dimensional irreducible representations. 

We prove that such a decomposition is unique up to unitary equivalence when
the summands arising from real CMV matrices are grouped according to their
multiplicities and the spectral measures of the corresponding matrices
are mutually singular across distinct multiplicities.
Furthermore, we show that the representation associated with each such
real CMV matrix $C$ admits a direct integral decomposition into irreducible
unitary representations with respect to the push-forward of the restriction
of the spectral measure of $C$ to the closed upper half circle.

These results make it possible to study the structure of unitary
representations of the infinite dihedral group using CMV matrices and the
theory of orthogonal polynomials on the unit circle.
\end{abstract}

%%Graphical abstract
%\begin{graphicalabstract}
%\includegraphics{grabs}
%\end{graphicalabstract}

\ifthenelse{\value{arxiv} = 0}{
\input 00-02_specific_journal_20261026a.tex
} {}

%% Keywords
\begin{keyword}
Orthogonal projections \sep Infinite dihedral group \sep Spherical representation \sep CMV matrix
%% keywords here, in the form: keyword \sep keyword
\end{keyword}

\end{frontmatter}

\ifthenelse{\value{arxiv} = 1}{
\tableofcontents
} {}

\noindent \textbf{MSC:} 22D10, 47A67, 42C05

%% Use \section commands to start a section
\section{Introduction and main results}
\label{sec:intro}

The theory of pairs of orthogonal projections on a Hilbert space, often
called two projections theory, has been extensively studied since the
pioneering work of C.~Jordan in 1875~\cite{CJordan1875}.
For an overview of the theory, we refer the reader to
the survey~\cite{ABottcher2010} and the references therein.

Pairs of orthogonal projections arise naturally in integral equations and
numerical analysis~\cite{SRoch1988, CDavis1970, FDeutsch1984},
the geometry of Grassmann manifolds~\cite{SBerceanu1995},
probability, statistics, and machine learning~\cite{AGoldman2022, QQin2024, KDCock2002, YAizenbud2025},
control theory and signal processing~\cite{PVOverschee1993, JMSchumacher1992, BAdcock2015, DAttali2026},
and quantum information processing~\cite{Primaatmaja2023, MMcKague2012}.
They therefore play an important role in a wide range of areas including
mathematics, physics, and engineering.

Specifying a pair of orthogonal projections $P$ and $Q$ on a Hilbert space $V$
is equivalent to specifying a unitary representation $\pi$ of the infinite
dihedral group $D_\infty$, generated by $g_1$ and $g_2$, via
$\pi(g_1)=2P-\operatorname{id}_V$ and
$\pi(g_2)=2Q-\operatorname{id}_V$.
Thus, questions about pairs of orthogonal projections, such as determining the spectra
of operators generated by $P$ and $Q$, can be reformulated as questions
about unitary representations of $D_\infty$.
In this paper, we study two projections theory from the viewpoint of
the representation theory of $D_\infty$.

A starting point for our main results is the following unique discrete
direct-sum decomposition theorem for unitary representations of
$\mathbb{Z}$.
Its summands can be realized by CMV matrices~\cite{MJCantero2003}, which are five-diagonal
unitary matrices with a cyclic vector.
From the viewpoint of group representation theory,
a unitary representation of $\mathbb{Z}$
whose generator is represented by a CMV matrix
is a spherical representation of the Gelfand pair $(\mathbb{Z},\{e\})$.
For a Gelfand pair $(G,K)$, we follow \cite{ABorodin2017}
in defining a spherical representation as a unitary
representation admitting a cyclic $K$-fixed vector,
without assuming irreducibility.
CMV matrices play a fundamental role in the theory of orthogonal
polynomials on the unit circle; see, e.g., \cite{BSimon2005p1,BSimon2005p2}.
For notational convenience, we include the unique $0 \times 0$ matrix,
acting on the zero-dimensional Hilbert space $\{0\}$, among CMV matrices.
We also regard the zero representation as cyclic, with cyclic vector $0$;
the corresponding spectral measure is the zero measure.
Throughout this paper, all unitary representations are assumed to act on
separable Hilbert spaces.

\begin{Fact}[Unique CMV decomposition for $\mathbb{Z}$]
\label{fact:zunitaryrepresentationdecomposition}
Let $\pi$ be a unitary representation of $\mathbb{Z}$.
Then the following statements hold.
\begin{enumerate}
  \item[(i)] There exists a sequence
  $\rho_1,\rho_2,\dots,\rho_\infty$ of cyclic unitary representations
  of $\mathbb{Z}$ such that the following unitary equivalence holds:
  \begin{equation*}
  \pi \simeq
  (\rho_1 \oplus 2\rho_2 \oplus \cdots)
  \oplus \infty\rho_\infty.
  \end{equation*}
    Here, $m\rho_m$ denotes the discrete direct sum of $m$ copies of
  $\rho_m$, with $m=\infty$ denoting countably infinitely many copies.

Each $\rho_i(1)$ is represented by a CMV matrix with respect to a complete
orthonormal system.
Let $\mu_i$ denote its spectral measure with respect to a cyclic vector.
Then $\mu_k \perp \mu_l$ for any distinct $k, l \in \mathbb{N} \cup \{\infty\}$.
  \item[(ii)] The sequence $\rho_1,\rho_2,\dots,\rho_\infty$
  satisfying~(i) is uniquely determined up to unitary equivalence.
\end{enumerate}
\end{Fact}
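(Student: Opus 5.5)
This is the Hahn--Hellinger multiplicity theory for the single unitary operator $U=\pi(1)$, combined with the CMV realization of cyclic unitaries. The plan is to derive (i) from the spectral theorem in multiplicity form and (ii) from the uniqueness of the multiplicity function and the measure class. I work throughout with the projection-valued measure $E$ of $U$ on the unit circle $\partial\mathbb{D}$.

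\emph{Existence.} First decompose $V$ into an orthogonal sum of cyclic subspaces $V=\bigoplus_j \overline{\operatorname{span}}\{U^n v_j : n\in\mathbb{Z}\}$. This uses Zorn's lemma or a countable dense set, and separability makes the sum countable. Each summand is unitarily equivalent to multiplication by $z$ on $L^2(\partial\mathbb{D},\nu_j)$. Next, choose the $v_j$ so that the spectral measures are nested, $\nu_1\gg\nu_2\gg\cdots$. This is the standard Hahn--Hellinger step: pick $v_1$ with maximal spectral type, then repeat inside the orthocomplement. Set $\nu=\sum 2^{-j}\nu_j$ and let $S_j$ be Borel sets with $\nu_j\sim \nu|_{S_j}$ and $S_1\supseteq S_2\supseteq\cdots$. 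Define the multiplicity function $m(z)=\#\{j: z\in S_j\}\in\mathbb{N}\cup\{\infty\}$. Put $\mu_k=\nu|_{\{m=k\}}$, so the $\mu_k$ are carried by disjoint sets and are mutually singular. Regrouping then gives $V\simeq\bigoplus_k L^2(\mu_k)\otimes\mathbb{C}^k$ with $U$ acting as multiplication by $z$, which is $\pi\simeq\bigoplus_k k\rho_k$ with $\rho_k$ the cyclic representation of spectral measure $\mu_k$. Finally, each cyclic $\rho_k$ is realized by a CMV matrix:
\begin{itemize}
\item if $\mu_k$ has infinite support, normalize it to a probability measure, apply Gram--Schmidt to $1,z,z^{-1},z^2,z^{-2},\dots$ in $L^2(\mu_k)$, and the matrix of multiplication by $z$ in this basis is the CMV matrix (Cantero--Moral--Vel\'azquez);
\item if $\mu_k$ has finite support, the same construction gives a finite CMV matrix;
\item if $\mu_k=0$, it is the $0\times 0$ matrix, by the paper's convention.
\end{itemize}
Normalizing the cyclic vector does not change the measure class, and by the convention one fixes the cyclic vector $\delta_0$, so this is harmless.

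\emph{Uniqueness.} Suppose $\pi\simeq\bigoplus_k k\rho_k\simeq\bigoplus_k k\rho'_k$, with both sides satisfying (i). A cyclic representation is determined up to equivalence by the measure class of its spectral measure. So it suffices to show $\mu_k\sim\mu'_k$ for every $k$. The invariant to use is the commutant $\{U\}'$, or equivalently the multiplicity function. The key observation is that on $E(B)V$, for $B$ a carrier of $\mu_k$ disjoint from the carriers of the other $\mu_l$, $U$ is unitarily equivalent to $z\otimes I_k$ on $L^2(\mu_k)\otimes\mathbb{C}^k$. For any Borel $B'\subseteq B$ with $\mu_k(B')>0$, the commutant of $U|_{E(B')V}$ is $L^\infty(\mu_k|_{B'})\otimes M_k$. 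Its center is $L^\infty$, and the dimension of its minimal "abelian" pieces is $k$. Intrinsically, the maximal number of mutually orthogonal nonzero vectors in $E(B')V$ with mutually equivalent spectral measures equals $k$. Hence the function $m$ is determined $\nu$-a.e. by $\pi$, independently of the decomposition. This forces $\{m=k\}$ to agree for the two decompositions up to null sets, so $\mu_k\sim\mu'_k$. The main obstacle is stating this multiplicity invariant cleanly and proving it is well defined. I would use the standard lemma that $z\otimes I_k$ on $L^2(\mu)\otimes\mathbb{C}^k$ is not equivalent to $z\otimes I_l$ on $L^2(\mu')\otimes\mathbb{C}^l$ for $k\ne l$ with $\mu\not\perp\mu'$. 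This can be proved by restricting to a common part and comparing fibre dimensions: a unitary intertwiner commutes with $E$, hence is decomposable, and would give a.e.\ unitary maps $\mathbb{C}^k\to\mathbb{C}^l$. Alternatively, one can simply cite the Hahn--Hellinger theorem (e.g.\ Conway, \emph{A Course in Operator Theory}, IX.10) together with the classical fact that cyclic unitaries are CMV-realizable.
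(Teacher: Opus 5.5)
The paper gives no proof of this Fact. It is quoted as classical background: the Hahn--Hellinger multiplicity theorem for $\pi(1)$, combined with the realization of cyclic unitaries by CMV matrices. The paper's own $D_\infty$ analogue, Theorem~\ref{theorem:dinftyunitaryrepresentationdecomposition}, is obtained in the same spirit. It cites the abstract type~I decomposition \cite[Theorem~7.32]{GBFolland2016} into mutually disjoint multiplicity-free pieces grouped by multiplicity, and then identifies the pieces. For $\mathbb{Z}$, those identifications would be that multiplicity-free means cyclic and that disjoint means mutually singular spectral measures. Your hands-on Hahn--Hellinger route is the expected self-contained version of this, and the regrouping and CMV-realization steps are fine. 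Two steps of the sketch are wrong as written, though neither is fatal.

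\textbf{The intrinsic invariant is false as stated.} Take $U=M_{\boldsymbol z}$ on $L^2(S^1,d\theta/2\pi)$, so $k=1$. The vectors $z^n$ $(n\in\mathbb{Z})$ are mutually orthogonal and all have spectral measure $d\theta/2\pi$, so the maximum is infinite.

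What is true is the count of vectors whose \emph{cyclic subspaces} are mutually orthogonal, i.e.\ $\langle U^n v_i,v_j\rangle=0$ for all $n\in\mathbb{Z}$ and $i\ne j$. In $L^2(\mu_k|_{B'})\otimes\mathbb{C}^k$ this forces the $\mathbb{C}^k$-valued functions $v_i(z)$ to be pointwise orthogonal a.e. Equivalence of their spectral measures $\|v_i(z)\|^2\,d\mu_k$ makes them simultaneously nonzero on a common set of positive measure. Hence there are at most $k$ of them, and $k$ is attained by $1_{B'}\otimes e_i$.

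Your fibre-dimension lemma also works, once you spell out how it is applied:
\begin{itemize}
\item Suppose $\mu_k\not\perp\mu'_l$ with $k\ne l$. Pick $C\subseteq B_k\cap B'_l$ with $\mu_k|_C\sim\mu'_l|_C\ne0$.
\item Then $U|_{E(C)V}$ is equivalent both to $z\otimes I_k$ and to $z\otimes I_l$ over equivalent measures, which is impossible. So $\mu_k\perp\mu'_l$ for $k\ne l$.
\item Each $\mu_k$ is absolutely continuous with respect to the maximal spectral type $\sum_l c_l\mu'_l$ of $U$, for any summable weights $c_l>0$. Together with the previous step this gives $\mu_k\ll\mu'_k$, and symmetrically $\mu'_k\ll\mu_k$.
\end{itemize}

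\textbf{The greedy nesting step need not exhaust $V$.} ``Pick $v_1$ of maximal spectral type, then repeat inside the orthocomplement'' can fail. For $U=\operatorname{id}$ on $\ell^2$, every nonzero vector has maximal type, and the choices $e_2,e_3,\dots$ never reach $e_1$.

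The standard repair interleaves a dense sequence $(x_n)$. Let $H$ be the current orthocomplement, $y$ the projection of $x_n$ onto $H$, and $w\in H$ a vector of maximal type $\nu_w$ in $H$. Choose a Borel set $S$ with $\nu_y\sim\nu_w|_S$. Then $v_n=y+E(S^c)w$ has type $\nu_y+\nu_w|_{S^c}\sim\nu_w$, and $y=E(S)v_n$ lies in its cyclic subspace.

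Alternatively, you can skip nesting altogether. The countable orthogonal decomposition from your first step regroups directly: enumerate $\{j:z\in S_j\}$ in a Borel way as $j_1(z)<j_2(z)<\cdots$. Then send the $n$-th coordinate of $\mathbb{C}^{m(z)}$ at $z$ to the $j_n(z)$-th summand.
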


Furthermore, the spectral resolution theorem for unitary operators yields a direct
integral decomposition of each summand in
Fact~\ref{fact:zunitaryrepresentationdecomposition} into irreducible
unitary representations, as follows.

\begin{Fact}[Direct integral decomposition of CMV matrices]
\label{fact:zunitaryrepresentationdirectintegraldecomposition}
Let $C$ be a CMV matrix, let $\rho$ be the unitary representation
of $\mathbb{Z}$ defined by $\rho(1)=C$, and let
$\mu$ be the spectral measure of $\rho(1)$ with respect to a cyclic vector.
Then
\begin{equation*}
\rho \simeq \int_{S^1}^{\oplus} \xi(z)\,d\mu(z).
\end{equation*}
Here, for $z\in S^1$, $\xi(z)$ is the one-dimensional irreducible unitary
representation of $\mathbb{Z}$ in which the generator $1$ acts on
$\mathbb{C}$ by multiplication by $z$.
\end{Fact}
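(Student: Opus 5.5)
The idea is to reduce the statement to the multiplication-operator model given by the spectral theorem for a unitary operator with a cyclic vector. Then we recognize that model as a direct integral of the characters $\xi(z)$.

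Let $e_0$ be the cyclic vector of $\rho(1)=C$, for instance the first vector of the complete orthonormal system in which $C$ is five-diagonal. Let $\mu$ be the associated spectral measure, i.e.\ the unique finite positive Borel measure on $S^1$ with
\begin{equation*}
\langle C^n e_0, e_0\rangle=\int_{S^1} z^n\,d\mu(z), \qquad n\in\mathbb{Z}.
\end{equation*}
If the underlying space is $\{0\}$, then $\mu=0$ and both sides are the zero representation, so there is nothing to prove. Otherwise, I will define $U$ on the dense span of $\{C^n e_0\}_{n\in\mathbb{Z}}$ by $U(C^n e_0)=z^n$, viewed as an element of $L^2(S^1,\mu)$. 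The moment identity gives $\langle C^n e_0, C^m e_0\rangle=\int z^{n}\overline{z^{m}}\,d\mu$, so $U$ is well defined and isometric on finite linear combinations. Hence $U$ extends to an isometry from the whole space into $L^2(S^1,\mu)$.

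Surjectivity comes from density of trigonometric polynomials in $L^2(S^1,\mu)$. By Stone--Weierstrass they are uniformly dense in $C(S^1)$, and continuous functions are dense in $L^2$ of a finite Borel measure on a compact metric space. By construction, $UCU^{-1}$ is multiplication by $z$. So $\rho$ is unitarily equivalent to the representation $n\mapsto M_{z^n}$ on $L^2(S^1,\mu)$.

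It then remains to identify this multiplication representation with $\int_{S^1}^{\oplus}\xi(z)\,d\mu(z)$. Take the constant measurable field of Hilbert spaces $z\mapsto\mathbb{C}$ over $(S^1,\mu)$. Its direct integral is, by definition, the space of square-integrable sections, which is exactly $L^2(S^1,\mu)$. The field of representations $z\mapsto\xi(z)$ is measurable, since $z\mapsto\langle\xi(z)(n)1,1\rangle=z^n$ is continuous. The direct integral representation therefore acts on a section $f$ by $(n\cdot f)(z)=\xi(z)(n)f(z)=z^nf(z)$, which is precisely $M_{z^n}$. Composing with $U$ gives the claimed equivalence.

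No step is a genuine obstacle; the only point needing care is the density argument that makes $U$ surjective, together with checking that the notion of direct integral used here matches the constant-field model.
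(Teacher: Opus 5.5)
Your proposal is correct and takes the same approach as the paper. The paper gives no proof beyond invoking the spectral resolution theorem for unitary operators, and you have written out that argument: the cyclic-vector model on $L^2(S^1,\mu)$ with $C$ becoming multiplication by $z$, followed by identification with the constant-field direct integral of the characters $\xi(z)$. Your argument applies unchanged to an arbitrary cyclic vector and to finite CMV matrices.
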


We identify the subgroup $\langle g_2g_1 \rangle$ of $D_\infty$ with
$\mathbb{Z}$.
Under this identification, $D_\infty$ is an index-two extension of
$\mathbb{Z}$.
It is therefore natural to expect a close connection between the
unitary representation theories of the two groups.

In fact, every spherical unitary representation of the Gelfand pair
$(D_\infty,\langle g_1 \rangle)$ 
can be realized, up to unitary equivalence, through the natural factorization of a
real CMV matrix. This characterization was proved for infinite-dimensional
representations in~\cite{YFujii2026}, and the finite-dimensional case can be established
by the same argument.
To describe this realization, let $C = LM$ be the natural factorization
of a real CMV matrix $C$. Since both $L$ and $M$ are unitary involutions, the assignments
\begin{equation*}
\pi(g_1)=M,\qquad \pi(g_2)=L
\end{equation*}
define a unitary representation of $D_\infty$ with $\pi(g_2g_1)=C$.
Following our previous work~\cite{YFujii2026},
we call any finite- or infinite-dimensional representation obtained in this way
an OSF (one-shifted form) representation.
We also regard the zero representation as an OSF representation.

The following existence theorem was established
in our previous work~\cite{YFujii2026}.

\begin{Fact}[Existence of OSF decomposition for $D_\infty$ \cite{YFujii2026}]
\label{fact:introgeneralizedjordanlemma}
Every unitary representation of $D_\infty$ is unitarily equivalent to an
at most countable discrete direct sum whose summands are either copies of
the exceptional one-dimensional representations
$\iota_{-,+},\iota_{-,-}$ or OSF representations of dimension at most two
or of infinite dimension.
Here, $\iota_{-,\pm}$ are the one-dimensional representations defined by
$\iota_{-,\pm}(g_1)=-1$ and $\iota_{-,\pm}(g_2)=\pm1$.
\end{Fact}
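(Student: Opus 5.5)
The plan is to reduce the statement to the classical two projections theory (Halmos' decomposition) together with the existence of cyclic vectors for the generic part. Set $P=(\pi(g_1)+\operatorname{id}_V)/2$ and $Q=(\pi(g_2)+\operatorname{id}_V)/2$. First split off the four ``corner'' subspaces $\operatorname{ran}P\cap\operatorname{ran}Q$, $\operatorname{ran}P\cap\ker Q$, $\ker P\cap\operatorname{ran}Q$ and $\ker P\cap\ker Q$. Each is invariant under $\pi$ and is a direct sum of copies of one of the four one-dimensional representations $\iota_{\pm,\pm}$. The two with $\pi(g_1)=+1$ are one-dimensional OSF representations, since a $1\times1$ real CMV matrix $(\pm1)$ factors as $L=(\pm1)$, $M=(1)$. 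The other two are the exceptional $\iota_{-,\pm}$. On the orthogonal complement (the generic part), Halmos' theorem realizes the pair as $P=\begin{pmatrix}1&0\\0&0\end{pmatrix}$ and $Q=\begin{pmatrix}c^2&cs\\cs&s^2\end{pmatrix}$ on $H\oplus H$, with commuting positive contractions $c,s$ satisfying $c^2+s^2=1$ and $\ker c=\ker s=0$.

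Next, decompose $H$ under the abelian von Neumann algebra generated by $c$. By spectral multiplicity theory, $H$ is a countable direct sum of cyclic subspaces $H_j$ for $c$. Each block $H_j\oplus H_j$ reduces $P$ and $Q$. The vector $(\eta_j,0)$, with $\eta_j$ cyclic for $c$ on $H_j$, is fixed by $\pi(g_1)$. The task is then to show it is cyclic for $\pi$ on $H_j\oplus H_j$. To do so, note that $PQP$ restricted to $H_j\oplus 0$ is $c^2$, which is cyclic there. Moreover, $(1-P)Q$ maps $(x,0)$ to $(0,csx)$, and $cs$ has dense range because $\ker(cs)=0$. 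Hence the orbit closure contains $H_j\oplus H_j$.

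Finally, apply the characterization from \cite{YFujii2026} (and its finite-dimensional analogue mentioned above): a representation with a cyclic $g_1$-fixed vector is OSF. Applying Gram--Schmidt to the sequence $v,\ Cv,\ C^{-1}v,\dots$ yields the real CMV matrix and its $LM$ factorization. Each block is therefore OSF, of dimension $2$ when $H_j$ is one-dimensional and infinite otherwise. We can arrange this dichotomy by choosing the $H_j$ either one-dimensional (atoms of the spectral measure) or infinite-dimensional. Any finite-dimensional cyclic piece of dimension $n>1$ can be split further into one-dimensional eigenspaces, since $c$ is then diagonalizable on it.

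The main obstacle is the cyclicity/OSF step for the generic blocks. Specifically, we must verify that the Gram--Schmidt basis built from $(\eta_j,0)$ gives exactly the alternating $L$, $M$ structure with real Verblunsky coefficients. This relies on the reflection symmetry $\pi(g_1)C\pi(g_1)=C^{-1}$, which forces the spectral measure to be conjugation-invariant. I would cite \cite{YFujii2026} for this rather than redo it. Countability of the total sum follows from separability.
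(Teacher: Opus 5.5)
Your proof is correct. Note, though, that the paper does not prove this statement: it imports it from \cite{YFujii2026}. The closest argument in the paper is the existence half of the proof of Theorem~\ref{theorem:dinftyunitaryrepresentationdecomposition}, and that argument is abstract.
\begin{itemize}
\item Type~I theory \cite[Theorem~7.32]{GBFolland2016} splits $\pi$ canonically into mutually disjoint multiplicity-free pieces, grouped by multiplicity.
\item The $\iota_{-,\pm}$-parts are removed.
\item Fact~\ref{fact:sphericalcriteria} produces a cyclic $\langle g_1\rangle$-fixed vector in each remaining piece, using a measurable field of fixed vectors in a direct integral and the commutativity of the commutant.
\item Theorem~\ref{theorem:sphericalrepresentation} turns each piece into an OSF representation.
\end{itemize}

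You replace the first three steps with Halmos' theorem, a cyclic decomposition of the single operator $c$, and a hand verification that $(\eta_j,0)$ is cyclic. Your final step through Theorem~\ref{theorem:sphericalrepresentation} is the same. One detail you should state explicitly: $\eta_j$ is cyclic for $c^2$ on $H_j$ because $c=(c^2)^{1/2}$ is a norm limit of polynomials in $c^2$.

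\textbf{What your route gains.} It is elementary, with no direct integrals or type~I machinery. It also gives the dimension dichotomy of the Fact directly. When $H_j$ is one-dimensional, $c$ acts there as a scalar $\cos\Theta$ with $\Theta\in(0,\pi/2)$, and the block is exactly $\eta_{2\Theta}$. So for these blocks you do not even need the finite-dimensional version of Theorem~\ref{theorem:sphericalrepresentation}.

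\textbf{What the paper's route gains.} It yields a canonical decomposition with mutually singular spectral measures, together with uniqueness. Yours depends on an arbitrary choice of the $H_j$, so it gives neither. On the other hand, the paper's multiplicity-free OSF summands can be finite-dimensional of dimension greater than two; for instance $\iota_{+,+}\oplus\eta_\theta$ is a three-dimensional OSF representation. Recovering the Fact from that route therefore needs an extra step: split each finite-dimensional summand into irreducibles using Fact~\ref{fact:behncke}.
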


The main aim of this paper is to establish decomposition results
for unitary representations of $D_\infty$ in analogy with
the CMV decomposition results for $\mathbb{Z}$ recalled in
Fact~\ref{fact:zunitaryrepresentationdecomposition} and
Fact~\ref{fact:zunitaryrepresentationdirectintegraldecomposition}.
More precisely, we seek to establish a uniqueness criterion for OSF
decompositions in terms of the mutual singularity of the spectral
measures of the corresponding real CMV matrices.
We also aim to describe the direct integral decomposition
of each OSF representation
into irreducible unitary representations
in terms of the spectral measure
of its associated CMV matrix.
Our approach combines the characterization of OSF representations
as spherical representations, in both finite and infinite dimensions,
with standard results on direct-sum and direct integral
decompositions of unitary representations of type I groups
 and the general theory of spherical representations.

The first two of our three main contributions are stated below.

\begin{Theorem}[Unique OSF decomposition for $D_\infty$]
\label{theorem:introdinftyunitaryrepresentationdecomposition}
Let $\pi$ be a unitary representation of $D_\infty$.
Then the following statements hold.
\begin{enumerate}
  \item[(i)] There exist a sequence
  $\rho_1,\rho_2,\dots,\rho_\infty$ of OSF representations,
  and unitary representations $\iota_+$ and $\iota_-$
  such that
  \begin{equation*}
  \pi \simeq
  (\rho_1 \oplus 2\rho_2 \oplus \cdots) \oplus \infty\rho_\infty
  \oplus \iota_{+} \oplus \iota_{-},
  \end{equation*}
  where $\iota_{+}$ and $\iota_{-}$ are at most countable discrete direct sums
  of copies of $\iota_{-,+}$ and $\iota_{-,-}$, respectively.
  For each $\rho_i$,
  let $\mu_i$ denote a spectral measure of the real CMV matrix
  $\rho_i(g_2g_1)$ with respect to a cyclic vector.
  Then $\mu_k \perp \mu_l$ for any distinct $k, l \in \mathbb{N} \cup \{\infty\}$.

  \item[(ii)] The sequence $\rho_1,\rho_2,\dots,\rho_\infty, \iota_{+}, \iota_{-}$
  satisfying~(i) is uniquely determined up to unitary equivalence.
\end{enumerate}
\end{Theorem}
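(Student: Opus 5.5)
Our plan is to reduce both parts to the representation theory of $D_\infty$ as a type I group, using its irreducible unitary dual and the characterization of OSF representations as spherical representations of $(D_\infty,\langle g_1\rangle)$.

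The irreducible unitary representations of $D_\infty$ are the four one-dimensional characters $\iota_{\pm,\pm}$ and the two-dimensional representations $\sigma_\theta$, $\theta\in(0,\pi)$, in which $g_2g_1$ has eigenvalues $e^{\pm i\theta}$. Exactly $\iota_{+,+},\iota_{+,-}$ and the $\sigma_\theta$ have a nonzero $g_1$-fixed vector, and in each case it is unique up to scalars. These are precisely the irreducible OSF representations: real CMV matrices of size one and size two. In $\iota_{+,\pm}$ the generator $g_2g_1$ acts by $\pm1$. We parametrize this spherical part of the dual by the closed upper half circle $\overline{S^1_+}=\{e^{i\theta}:0\le\theta\le\pi\}$, via $e^{i\theta}\mapsto\sigma_\theta$ for $0<\theta<\pi$, $1\mapsto\iota_{+,+}$ and $-1\mapsto\iota_{+,-}$.

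\textbf{Step 1 (spherical multiplicity one).} We show that a unitary representation is OSF if and only if its disintegration over the dual is carried by the spherical part with multiplicity one. The forward direction runs as follows. If $\rho$ is OSF with real CMV matrix $C$, then $\rho$ is cyclic for the $g_1$-fixed vector $e_0$. Since $(D_\infty,\langle g_1\rangle)$ is a Gelfand pair, the commutant of $\rho$ is abelian. The argument is standard: the compression of $\rho(\cdot)$ to the fixed space $V^{g_1}$ generates a commutative algebra, and cyclicity then transfers commutativity to the commutant. Hence $\rho$ is multiplicity free, and $\rho\simeq\int^\oplus_{\overline{S^1_+}}\tau(z)\,d\nu(z)$. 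We identify $\nu$ with the push-forward of $\mu|_{\overline{S^1_+}}$ under $\theta\mapsto\theta$. Realness of $C$ makes $\mu$ symmetric under conjugation, which is why only the upper half circle survives.

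The reverse direction: for a measure class $[\nu]$ on $\overline{S^1_+}$, the representation $\int^\oplus\tau\,d\nu$ has a cyclic $g_1$-fixed vector $z\mapsto v_z$, so it is spherical. By the realization theorem of~\cite{YFujii2026}, together with its finite-dimensional analogue, it is OSF. Moreover two OSF representations are equivalent if and only if their measure classes $[\nu]$ agree. Mutual singularity $\mu_k\perp\mu_l$ is thereby equivalent to mutual singularity of the corresponding $\nu_k,\nu_l$, using again the conjugation symmetry of $\mu_i$.

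\textbf{Step 2 (existence).} Write $\pi=\pi_{\mathrm{sph}}\oplus\iota_+\oplus\iota_-$. Here $\iota_\pm$ are the $\iota_{-,\pm}$-isotypic components, and $\pi_{\mathrm{sph}}$ is the part of the central decomposition lying over $\overline{S^1_+}$. The isotypic components are unambiguous because these characters are points of the dual.

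By the type I multiplicity theory (Hahn--Hellinger for the abelian von Neumann algebra of the center), $\pi_{\mathrm{sph}}\simeq\bigoplus_{m\in\mathbb{N}\cup\{\infty\}} m\int^\oplus\tau\,d\nu_m$ with the $\nu_m$ mutually singular. Step 1 converts each $\int^\oplus\tau\,d\nu_m$ into an OSF representation $\rho_m$, and the singularity of the $\nu_m$ gives $\mu_k\perp\mu_l$.

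Alternatively, one can start from Fact~\ref{fact:introgeneralizedjordanlemma} and regroup the summands by multiplicity. The first route is cleaner, so we will use it.

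\textbf{Step 3 (uniqueness).} Given any decomposition as in~(i), the $\iota_\pm$ are recovered as the $\iota_{-,\pm}$-isotypic subspaces, since no OSF representation contains $\iota_{-,\pm}$: those characters have no $g_1$-fixed vector. The remainder $\bigoplus m\rho_m$ is then a disintegration of $\pi_{\mathrm{sph}}$ with multiplicity function equal to $m$ on a support of $\nu_m$. This uses Step 1 and the singularity hypothesis, which ensures that the supports are disjoint, so multiplicities do not add.

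Uniqueness of the measure classes and multiplicity function in the type I decomposition then yields $[\nu_m]$ uniquely, and hence $\rho_m$ up to equivalence by Step 1.

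\textbf{Main obstacle.} The delicate point is Step 1: establishing precisely that an OSF representation has multiplicity one over the parametrization of the spherical dual, with the measure equal to the push-forward of $\mu|_{\overline{S^1_+}}$. This requires care at the endpoints $\pm1$, where the irreducibles are one-dimensional, and with the conjugation symmetry of $\mu$. It also requires verifying that every multiplicity-free representation over that set is cyclic for a $g_1$-fixed vector.

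To handle this, we would compute directly in the abelian commutant generated by $\pi(g_2g_1)+\pi(g_2g_1)^{*}$ restricted to $V^{g_1}$. On $V^{g_1}$ this operator is unitarily equivalent to multiplication by $2\cos\theta$ on $L^2(\nu)$.
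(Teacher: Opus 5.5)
Your proposal is correct and follows essentially the same route as the paper. Both arguments use the type~I decomposition into disjoint multiplicity-free pieces grouped by multiplicity and separate off $\iota_{-,\pm}$. Each remaining piece is then turned into an OSF representation via a cyclic $g_1$-fixed vector together with the spherical/OSF characterization, and singularity is transferred through the identification of the dual measure with $\mu|_{S^1_+}$. That identification is obtained, as in Theorem~\ref{theorem:explicitdirectintegralosf}, from $\pi(g_2g_1)+\pi(g_2g_1)^*$ and conjugation invariance. The only difference is that you sketch directly two standard inputs the paper cites: the abelian commutant of spherical representations, and the cyclicity of the fixed-vector field.
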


\begin{Theorem}[Direct integral decomposition of OSF representations]
\label{theorem:introductionexplicitdirectintegralosf}
Let $\pi$ be an OSF representation,
and let $\mu$ denote the spectral measure of $\pi(g_2g_1)$
with respect to a cyclic vector.
Then $\pi$
admits the following direct integral decomposition into irreducible
unitary representations:
\begin{equation*}
\pi \simeq \int_{\widehat{D_\infty}}^{\oplus} \xi\,dm(\xi),
\end{equation*}
where $m$ is the push-forward of the restriction of $\mu$ to the closed upper
half circle $S^1_+$ by the natural map from $S^1_+$ to the unitary
dual $\widehat{D_\infty}$.
\end{Theorem}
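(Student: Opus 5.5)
The plan is to prove Theorem~\ref{theorem:introductionexplicitdirectintegralosf} by first pinning down the unitary dual of $D_\infty$ and then identifying the decomposition of $\pi$ through the restriction to $\mathbb{Z}=\langle g_2g_1\rangle$, where Fact~\ref{fact:zunitaryrepresentationdirectintegraldecomposition} already gives the answer.

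\textbf{Step 1: the unitary dual.} Since $D_\infty=\mathbb{Z}\rtimes\mathbb{Z}/2$ and $g_1$ inverts $g_2g_1$, Mackey theory shows that $\widehat{D_\infty}$ consists of:
\begin{itemize}
\item[] the two-dimensional irreducibles $\xi_z$ for $z\in S^1$ with $\operatorname{Im}z>0$, induced from the character $\xi(z)$ of $\mathbb{Z}$;
\item[] the four one-dimensional representations $\iota_{\pm,\pm}$, where $g_2g_1$ acts by $1$ (for $\iota_{+,+},\iota_{-,-}$) or by $-1$ (for $\iota_{+,-},\iota_{-,+}$).
\end{itemize}
The natural map $S^1_+\to\widehat{D_\infty}$ sends $z$ with $\operatorname{Im}z>0$ to $\xi_z$. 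It sends $z=1$ to $\iota_{+,+}$ and $z=-1$ to $\iota_{+,-}$. The reason is that these are the one-dimensional representations with $g_1\mapsto +1$, i.e.\ those containing a $\langle g_1\rangle$-fixed vector. Each $\xi_z$ has a one-dimensional space of $g_1$-fixed vectors, so $(D_\infty,\langle g_1\rangle)$ is a Gelfand pair (this is also clear since $D_\infty$ is abelian-by-finite with the inversion automorphism).

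\textbf{Step 2: abstract direct integral.} By the characterization recalled before Fact~\ref{fact:introgeneralizedjordanlemma}, $\pi$ is spherical, with cyclic $g_1$-fixed vector $e_0$ (the first basis vector, since $Me_0=e_0$ in the natural factorization). Since $D_\infty$ is type I, $\pi\simeq\int^\oplus\xi\,d\nu(\xi)$ for some measure $\nu$ on $\widehat{D_\infty}$. Because $\pi$ is spherical for a Gelfand pair, the decomposition is multiplicity free and $\nu$ is carried by the spherical dual, namely the image of $S^1_+$. Concretely, the cyclic vector decomposes as a field of $K$-fixed unit vectors, and the commutant of $\pi$ acts on $\overline{\pi(\mathcal H(G,K))e_0}$ as the abelian Hecke algebra.

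\textbf{Step 3: identifying $\nu$, the main obstacle.} Compute the positive-definite spherical function $\langle\pi(g)e_0,e_0\rangle$ on $\mathbb{Z}$:
\[
\langle C^n e_0,e_0\rangle=\int_{S^1}z^n\,d\mu(z).
\]
Since $C$ is real, $\mu$ is invariant under $z\mapsto\bar z$. On the other side, $\langle\xi_z(g)v_z,v_z\rangle$ with $v_z$ the unit $g_1$-fixed vector gives $\tfrac12(z^n+\bar z^n)$ on $(g_2g_1)^n$. The one-dimensional cases give $1$ and $(-1)^n$. Hence
\[
\int z^n\,d\mu=\int\tfrac12(z^n+\bar z^n)\,d\nu'(z),
\]
where $\nu'$ is $\nu$ pulled back to $S^1_+$. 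Conjugation symmetry of $\mu$ then forces $\nu'$ to equal:
\begin{itemize}
\item[] $2\mu$ on the open upper half circle, which is $\mu+\bar\mu$ folded onto it;
\item[] $\mu$ on the endpoints $\{\pm1\}$.
\end{itemize}
The values on the elements $g_1(g_2g_1)^n$ are then determined by $g_1e_0=e_0$. Uniqueness of the positive-definite function determining a cyclic representation yields the unitary equivalence.

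The factor $2$ is absorbed by the multiplicity-free structure: a measure class changes nothing in a direct integral up to equivalence, and only the measure class matters. So $\nu'$ is equivalent to $\mu|_{S^1_+}$, and $\nu$ is equivalent to the push-forward $m$. Care is needed at exactly this point, showing that the density change is harmless by rescaling the fields. I also need to check that no atoms of $\iota_{-,\pm}$ appear; this holds because they have no $g_1$-fixed vector, and cyclicity of $e_0$ excludes them.
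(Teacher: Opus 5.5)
Your proof is correct and shares the paper's skeleton. Both use the abstract spherical decomposition of Fact~\ref{fact:spectralmeasurespherical}, then identify the measure from the spectral data of $C=\pi(g_2g_1)$ at your $e_0$ (the paper's $e_1$) together with conjugation invariance of $\mu$. The identification step, however, is done differently.

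In the proof of Theorem~\ref{theorem:explicitdirectintegralosf} the paper computes no spherical functions. It notes that $\Phi\,\tfrac12(C+C^*)\,\Phi^{-1}=M_r$ with $r(\xi)=\operatorname{Re}\gamma^{-1}(\xi)$. Hence the spectral projections of $A=\tfrac12(C+C^*)$ become multiplications by $1_{r^{-1}(B)}$, and the paper compares null sets: $(r_*m')(B)=0\iff E_A(B)=0\iff (E_A(B)e_1,e_1)=(\eta_*\mu)(B)=0$, where the second equivalence uses cyclicity of $e_1$. This yields only the measure class. In exchange it needs no normalization of vector fields and no moment-determinacy argument, and it delivers the commutant identity $\pi(D_\infty)'=W^*(C+C^*)$ at the same time.

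You instead match the positive-definite function $\langle\pi(g)e_0,e_0\rangle$ against $\int\langle\xi(g)u_\xi,u_\xi\rangle\,d\nu'$. Since $\tfrac12(z^n+\bar z^n)=T_{|n|}(\operatorname{Re}z)$, with $T_k$ the Chebyshev polynomials, this is really a comparison of scalar spectral measures of $\operatorname{Re}C$. It buys an exact answer: once the direct integral is normalized so that $e_0$ corresponds to a field of unit $K$-fixed vectors, the measure is precisely the image of all of $\mu$ under the folding map $S^1\to S^1_+$ sending $z$ to whichever of $z,\bar z$ lies in $S^1_+$.

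Two points need to be made explicit. First, $e_0$ corresponds to a field $\phi(\xi)u_\xi$, not a field of unit vectors, so your $\nu'$ is the pull-back of $|\phi|^2\nu$. This measure is equivalent to $\nu$ because cyclicity of $e_0$ forces $\phi\neq0$ almost everywhere: the projection $M_{1_{\{\phi=0\}}}$ lies in the commutant and kills the image of $e_0$. Second, your word ``forces'' rests on the fact that a finite measure on $S^1_+$ is determined by its integrals against $T_k(\operatorname{Re}z)$. This follows from density of polynomials in $C[-1,1]$ and injectivity of $\operatorname{Re}$ on $S^1_+$; you should state it. Also, once Step~2 gives $\pi\simeq\int^\oplus\xi\,d\nu$ directly, the appeal to GNS uniqueness is superfluous, since identifying the class of $\nu$ already finishes the proof.
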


Theorems~\ref{theorem:introdinftyunitaryrepresentationdecomposition}
and~\ref{theorem:introductionexplicitdirectintegralosf} are discussed in
detail in
Section~\ref{sec:uniqueosfdecomposition}
and Section~\ref{sec:directintegraldecomposition}, respectively.
These results make it possible to study the structure of unitary
representations of $D_\infty$ using CMV matrices and the theory of
orthogonal polynomials on the unit circle.

Furthermore,
Theorem~\ref{theorem:introductionexplicitdirectintegralosf} shows that,
within the class of OSF representations, the unitary equivalence class of
the restriction $\pi|{\mathbb{Z}}$ uniquely determines the unitary
equivalence class of the original representation $\pi$ of $D_\infty$.
In other words, we obtain the following corollary.

\begin{Corollary}
Let $\pi_1$ and $\pi_2$ be OSF representations.
If $\pi_1|{\mathbb{Z}} \simeq \pi_2|{\mathbb{Z}}$, then
$\pi_1 \simeq \pi_2$.
\end{Corollary}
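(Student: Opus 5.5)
The plan is to derive the corollary directly from Theorem~\ref{theorem:introductionexplicitdirectintegralosf}, combined with Fact~\ref{fact:zunitaryrepresentationdirectintegraldecomposition} for the subgroup $\mathbb{Z}=\langle g_2g_1\rangle$.

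\textbf{Step 1: match the spectral measure classes.} For $j=1,2$, let $C_j=\pi_j(g_2g_1)$. This is a real CMV matrix with a cyclic vector, and we let $\mu_j$ be its spectral measure with respect to that vector. By Fact~\ref{fact:zunitaryrepresentationdirectintegraldecomposition}, $\pi_j|\mathbb{Z}$ is unitarily equivalent to multiplication by $z$ on $L^2(S^1,\mu_j)$. The hypothesis $\pi_1|\mathbb{Z}\simeq\pi_2|\mathbb{Z}$ therefore gives a unitary equivalence between two cyclic unitary operators. By the standard spectral theorem for cyclic unitaries (multiplicity one), this forces $\mu_1$ and $\mu_2$ to be mutually absolutely continuous. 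Concretely, the maximal spectral type is an invariant, and for cyclic operators it is represented by the spectral measure of any cyclic vector.

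\textbf{Step 2: transfer the equivalence through the push-forward.} Restrict to the closed upper half circle and push forward by the natural map $p\colon S^1_+\to\widehat{D_\infty}$. Restriction and push-forward both preserve mutual absolute continuity, so $m_1=p_*(\mu_1|_{S^1_+})$ and $m_2=p_*(\mu_2|_{S^1_+})$ are equivalent measures on $\widehat{D_\infty}$. By Theorem~\ref{theorem:introductionexplicitdirectintegralosf}, $\pi_j\simeq\int^{\oplus}\xi\,dm_j(\xi)$. A direct integral with constant multiplicity one depends, up to unitary equivalence, only on the measure class. Explicitly, if $m_2=h\,m_1$ with $h>0$ a.e., then the map $f\mapsto h^{-1/2}f$ intertwines the two direct integrals fibrewise. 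Hence $\pi_1\simeq\pi_2$.

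\textbf{Main obstacle.} The delicate point is the measure-class argument in Step 2. The direct integral in Theorem~\ref{theorem:introductionexplicitdirectintegralosf} has to be understood as a multiplicity-one integral over a measurable field of representations that is fixed once and for all, and that field must not depend on $\pi_j$. This independence is what allows the rescaling $f\mapsto h^{-1/2}f$ to be a legitimate intertwiner. I would verify it from the statement, which integrates the identity field $\xi\mapsto\xi$ over $\widehat{D_\infty}$; measurability is not an issue because $D_\infty$ is type~I.

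A second point to check is that only $\mu_j|_{S^1_+}$ enters the decomposition. Real CMV matrices have conjugation-symmetric spectral measures, so the lower half circle carries no extra information. This symmetry is not needed for the argument above, since the equivalence $\mu_1\sim\mu_2$ already holds on all of $S^1$.
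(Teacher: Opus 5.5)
Your proposal is correct and follows the paper's route. The paper derives the corollary directly from Theorem~\ref{theorem:introductionexplicitdirectintegralosf}: $\pi|\mathbb{Z}$ determines the measure class of $\mu$, hence that of the push-forward $m$, and your Radon--Nikodym rescaling $f\mapsto h^{-1/2}f$ makes that last step explicit; it is the same square-root density rescaling that appears as the concrete intertwiner $M_{|h|}$ in Proposition~\ref{proposition:osfintertwinerlifting}.
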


Using the functional realization of OSF representations on
$L^2(S^1,\mu)$, where $\mu$ is the spectral measure of $\pi(g_2g_1)$
with respect to a suitable cyclic vector,
we explicitly construct a unitary intertwining operator between
two OSF representations of $D_\infty$ from a unitary
intertwining operator between their restrictions to $\mathbb{Z}$.
This functional realization is a reformulation of
a well-known result in the theory of
orthogonal polynomials on the unit circle~\cite{BSimon2005p1}.
Furthermore, for OSF representations, we use the same realization
to explicitly obtain the standard $2 \times 2$ block-operator
representation of a pair of orthogonal projections appearing
in Halmos' two projections theorem \cite{PRHalmos1969}.
These results are established in Section~\ref{sec:realizationfunctionspace}.

The third contribution shows that the commutant of an OSF representation $\pi$
can also be described explicitly in terms of its restriction to
$\mathbb{Z}$, as a von Neumann algebra generated by $\pi(g_2g_1)+\pi(g_2g_1)^*$.
For details, see Section~\ref{sec:directintegraldecomposition}.

\begin{Theorem}[Commutant structure of OSF representations]
\label{theorem:intronontrivialintertwiningoperator}
Let $(\pi,V)$ be an OSF representation. Then
\begin{equation*}
\pi(D_\infty)'=W^*(\pi(g_2g_1)+\pi(g_1g_2)),
\end{equation*}
where $W^*(\pi(g_2g_1)+\pi(g_1g_2))$ denotes the von Neumann algebra
generated by $\pi(g_2g_1)+\pi(g_1g_2)$ and the identity operator.
\end{Theorem}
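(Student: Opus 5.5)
The plan is to prove Theorem~\ref{theorem:intronontrivialintertwiningoperator} by identifying both sides on the functional realization of $\pi$. The realization is the one on $L^2(S^1,\mu)$ described above, where $\mu$ is the spectral measure of $C=\pi(g_2g_1)$ with respect to a suitable cyclic vector. Since $C$ is real, $\mu$ is invariant under complex conjugation $z\mapsto\bar z$.

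In this model $C$ acts as multiplication by $z$, so $C+C^*$ acts as multiplication by $z+\bar z=2\operatorname{Re} z$. I expect $\pi(g_1)$ to act as a twisted reflection, $(Rf)(z)=\overline{\phantom{.}}$-type symmetry $f(\bar z)$ possibly composed with a unimodular factor. The conjugation relation $\pi(g_1)C\pi(g_1)=C^{-1}$ forces $\pi(g_1)$ to intertwine multiplication by $z$ with multiplication by $\bar z$, so it has the form $(Rf)(z)=u(z)f(\bar z)$. Since $\pi(g_1)$ is an involution fixing the cyclic vector, and $\mu$ is conjugation-invariant, we expect $u\equiv1$ in the right normalization. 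I would first fix this explicit form from the realization in Section~\ref{sec:realizationfunctionspace}.

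\emph{Inclusion $\supseteq$.} The operator $C+C^*=\pi(g_2g_1)+\pi(g_1g_2)$ commutes with $C$. It also commutes with $\pi(g_1)$, because $\pi(g_1)(C+C^{-1})\pi(g_1)=C^{-1}+C$. Since $\pi(g_1)$ and $C$ generate $\pi(D_\infty)$, $C+C^*$ lies in the commutant. Hence the whole von Neumann algebra it generates lies in $\pi(D_\infty)'$.

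\emph{Inclusion $\subseteq$.} Let $T\in\pi(D_\infty)'$. Then $T$ commutes with $C$. Since $C$ is cyclic as multiplication by $z$ on $L^2(S^1,\mu)$, its commutant is maximal abelian. So $T$ is multiplication by some $\varphi\in L^\infty(\mu)$. Commuting with $R$ gives $\varphi(z)=\varphi(\bar z)$ for $\mu$-a.e.\ $z$, so $\varphi$ is $\mu$-a.e.\ conjugation-invariant.

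Such a function factors as $\varphi=\psi\circ h$ with $h(z)=z+\bar z\in[-2,2]$. The reason is that $h$ is injective on the closed upper half circle $S^1_+$, and $\psi$ can be taken as the Borel function $\varphi\circ(h|_{S^1_+})^{-1}$. The function $\psi$ is bounded and Borel with respect to the push-forward measure $h_*\mu$. Then $T=\psi(C+C^*)$ by the Borel functional calculus. Finally, $W^*(A)$ for a self-adjoint $A$ equals $\{\psi(A):\psi\text{ bounded Borel}\}$, by the spectral theorem or the bicommutant theorem. This gives $T\in W^*(C+C^*)$.

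The main obstacle is making the functional model rigorous, in particular the exact form of $\pi(g_1)$ on $L^2(S^1,\mu)$. I would derive this from the fact that $\pi(g_1)$ fixes a cyclic vector, the $\langle g_1\rangle$-spherical vector. Note that $\pi(g_1)$ is an antiunitary-free, real involution commuting with the conjugation symmetry of $\mu$.

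An alternative route avoids explicit formulas for $R$ by using Theorem~\ref{theorem:introductionexplicitdirectintegralosf}. The OSF representation is multiplicity-free, being a direct integral over distinct irreducibles with respect to $m$, so its commutant is the diagonal algebra $L^\infty(\widehat{D_\infty},m)$. The map $S^1_+\to\widehat{D_\infty}$ composed with $h$ is a Borel injection, and $C+C^*$ acts diagonally by $h$. Hence the diagonal algebra is generated by $C+C^*$. I would present this second argument if the realization details become cumbersome. The care needed there is at the points $z=\pm1$, where the irreducibles are one-dimensional, but $h$ still separates them.
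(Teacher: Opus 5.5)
Your argument is correct. Your main route differs from the paper's, while your ``alternative route'' is essentially what the paper does.

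The paper proves the identity inside Theorem~\ref{theorem:explicitdirectintegralosf}. It uses Theorem~\ref{theorem:sphericalrepresentation} and Fact~\ref{fact:spectralmeasurespherical} to obtain $\Phi:\pi\to\int^\oplus\xi\,dm'(\xi)$ with $\Phi\pi(D_\infty)'\Phi^{-1}$ equal to the diagonal algebra. It then notes that $\frac12(C+C^*)$ acts on each fiber by the scalar $r(\xi)=\operatorname{Re}\gamma^{-1}(\xi)$. Since $r$ is injective, the Borel functional calculus finishes the proof.

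Your main route instead works in the concrete model of Fact~\ref{fact:osffunctionalrealization}, and all its steps check out:
\begin{itemize}
\item That Fact already gives $\pi(g_1)=c$ exactly, so your hedging about a unimodular twist is unnecessary. Your own remark would settle it anyway: $c\pi(g_1)$ commutes with $M_{\boldsymbol z}$, so $\pi(g_1)=cM_v$, and $\pi(g_1)1=1$ (the cyclic vector $e_1$ is fixed by $\pi(g_1)$) forces $v\equiv1$.
\item In this model $\{M_{\boldsymbol z}\}'=L^\infty(\mu)$.
\item Commuting with $c$ gives $\varphi(z)=\varphi(\bar z)$ for $\mu$-a.e.\ $z$.
\item The Borel section $(h|_{S^1_+})^{-1}$ then writes $\varphi=\psi\circ h$ $\mu$-a.e.
\end{itemize}

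What your route buys is elementarity. It uses only the cyclicity of $M_{\boldsymbol z}$ and the conjugation invariance of $\mu$. It avoids the type~I and Gelfand-pair machinery, including the finite-dimensional case of Theorem~\ref{theorem:sphericalrepresentation}, which the paper only asserts. It also yields multiplicity-freeness directly. Your inclusion $\supseteq$ holds for every unitary representation of $D_\infty$.

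What the paper's route buys is economy within its own architecture. The same diagonalization $\Phi$ also identifies the spectral measure $m'\sim\gamma_*(\mu|S^1_+)$. That identification is needed for the direct integral formula and for the uniqueness argument in Theorem~\ref{theorem:dinftyunitaryrepresentationdecomposition}, so the commutant identity comes at essentially no extra cost there.
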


In Section~\ref{sec:multiplicityfreerepresentationcharacterization},
we further investigate the structure of multiplicity-free unitary
representations of $D_\infty$.
In particular, we show that the equality in
Theorem~\ref{theorem:intronontrivialintertwiningoperator}
characterizes unitary representations of $D_\infty$
whose restrictions to $\mathbb{Z}$ are multiplicity-free.

\section{Preliminaries}
\label{sec:preliminary}

\subsection{Spherical representations}

The following direct integral decomposition formula for spherical representations
is well known; see, e.g., \cite{JFaraut2008}.

\begin{Fact}
\label{fact:spectralmeasurespherical}
Let $(G,K)$ be a Gelfand pair with $G$ a
second countable locally compact group of type I,
let $\Omega$ denote the spherical dual of $(G,K)$,
that is, the set of unitary equivalence classes of
irreducible spherical representations of $(G,K)$,
and let $\pi$ be a spherical representation of $(G,K)$.
Then there exist a finite positive regular Borel measure $m$ on
$\Omega$ and a unitary intertwining operator
$\Phi:\pi \to \int_{\Omega}^\oplus \xi dm(\xi)$
such that $\Phi \circ \pi(G)' \circ \Phi^{-1} = \{ M_f \mid f \in L^\infty(\Omega, m)\}$.
Here $M_f$ denotes multiplication by $f$.
\end{Fact}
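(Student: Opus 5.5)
The plan is to derive Fact~\ref{fact:spectralmeasurespherical} from the general direct integral theory for type I groups. The argument has three steps: show that the commutant is abelian, invoke the multiplicity-free direct integral theorem, and show that almost every component is spherical.

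\emph{Step 1: the commutant is abelian.} Let $(\pi,V)$ be spherical with cyclic $K$-fixed vector $v$. Write $P_K=\int_K \pi(k)\,dk$ for the projection onto $V^K$.
\begin{itemize}
\item Averaging over $K$ on both sides shows that for $f\in L^1(G)$ we have $P_K\pi(f)v=\pi(f^\natural)v$, where $f^\natural$ is the $K$-biinvariant average of $f$.
\item Since $v$ is cyclic, $V^K$ is the closure of $\{\pi(h)v : h\in L^1(K\backslash G/K)\}$.
\item For $T\in\pi(G)'$, $T$ commutes with $P_K$, so $Tv\in V^K$.
\item Given $S,T\in\pi(G)'$, choose biinvariant $f_n,h_n$ with $\pi(f_n)v\to Tv$ and $\pi(h_n)v\to Sv$. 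Then
\begin{equation*}
STv=\lim_n S\pi(f_n)v=\lim_n \pi(f_n)Sv=\lim_n\lim_m \pi(f_n)\pi(h_m)v .
\end{equation*}
\item The Gelfand pair hypothesis says $L^1(K\backslash G/K)$ is commutative, so this limit is symmetric in $S$ and $T$. Hence $STv=TSv$.
\item Because $v$ is cyclic for $\pi(G)$ and $ST-TS$ lies in $\pi(G)'$, we get $ST=TS$.
\end{itemize}
So $\pi(G)'$ is abelian, that is, $\pi$ is multiplicity-free.

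\emph{Step 2: direct integral decomposition.} Since $G$ is second countable, locally compact and type I, and $V$ is separable, the standard Mackey--Dixmier theory applies to the multiplicity-free representation $\pi$. It yields a standard Borel structure on $\widehat G$, a finite positive Borel measure $m$ on $\widehat G$, and a unitary $\Phi:V\to\int_{\widehat G}^\oplus \xi\,dm(\xi)$ intertwining $\pi$. In this picture the decomposable operators are exactly the diagonal operators $M_f$ with $f\in L^\infty(\widehat G,m)$. Abelianness of $\pi(G)'$ forces multiplicity one $m$-a.e., so $\Phi\pi(G)'\Phi^{-1}=\{M_f\}$. Finiteness of $m$ can be arranged by replacing $m$ with an equivalent probability measure. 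Regularity is automatic for finite Borel measures on a standard Borel space, or one can pass to the Polish topology on the relevant Borel subset.

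\emph{Step 3: the measure lives on $\Omega$.} Write $\Phi v=(v(\xi))_\xi$. Since $\pi(k)v=v$ and $\Phi$ intertwines, $\xi(k)v(\xi)=v(\xi)$ for $m$-a.e.\ $\xi$; use a countable dense subset of $K$ to get a single conull set. Now let $E=\{\xi: v(\xi)=0\}$. Then $M_{1_E}\in\pi(G)'$ and $M_{1_E}v=0$, so cyclicity gives $M_{1_E}=0$, i.e.\ $m(E)=0$. Hence for $m$-a.e.\ $\xi$ the vector $v(\xi)$ is a nonzero $K$-fixed vector in an irreducible representation. So $\xi\in\Omega$ a.e., and $m$ can be regarded as a measure on $\Omega$, with $\Omega$ carrying the relative Borel structure.

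The main obstacle is Step 2. It is not proved here but invoked from the general theory: for type I groups, a representation with abelian commutant is a multiplicity-one direct integral over the dual whose commutant is the diagonal algebra. The care needed is in the Borel-structure bookkeeping, especially the measurability of $\Omega\subset\widehat G$. I would cite Dixmier's $C^*$-algebras, or Faraut's treatment of spherical representations, for this step.
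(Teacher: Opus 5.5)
\textbf{There is a genuine gap in Step~1}, at ``so this limit is symmetric in $S$ and $T$.'' Your computation gives
\[
STv=\lim_{n}\lim_{m}\pi(f_n*h_m)v,\qquad TSv=\lim_{m}\lim_{n}\pi(f_n*h_m)v .
\]
The two iterated limits are taken in opposite orders. Commutativity of $L^1(K\backslash G/K)$ makes the terms $\pi(f_n*h_m)v=\pi(h_m*f_n)v$ agree, but it says nothing about exchanging the limits. There is also no uniform bound on $\|\pi(f_n)\|$ or $\|\pi(h_m)\|$ that would justify the exchange.

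This is not a formality. Your ingredients are only a commutative algebra with a cyclic vector and two operators commuting with it, and in infinite dimensions these do not force the two operators to commute. Take $B$ the backward shift on $\ell^2$. Then $A_0=2B$ is mixing, so $A=A_0\oplus A_0$ is hypercyclic and hence cyclic. The commutative algebra $\mathbb{C}[A]$ therefore has a cyclic vector. Yet its commutant contains the flip $(x,y)\mapsto(y,x)$ and the projection $(x,y)\mapsto(x,0)$, which do not commute.

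\textbf{The fix is to use the $*$-structure.} Since the modular function is trivial on the compact group $K$, $f^*$ is bi-$K$-invariant whenever $f$ is. So $\mathcal{B}=\{\pi(f)|_{V^K} : f\in L^1(K\backslash G/K)\}$ is a commutative $*$-algebra on $V^K$ with cyclic vector $v$. Its elements are normal, so $\|\pi(f_n-f_k)^*v\|=\|\pi(f_n-f_k)v\|$. Hence $\pi(f_n)^*v$ converges, and pairing with the dense set $\mathcal{B}v$ identifies the limit as $T^*v$. Then for $a\in\mathcal{B}$,
\[
(STv,av)=\lim_n\bigl(Sv,\,a\,\pi(f_n)^*v\bigr)=(a^*Sv,\,T^*v)=(a^*TSv,\,v)=(TSv,av).
\]
This gives $STv=TSv$ with a single limit, and your last bullet then finishes the step.

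Equivalently: $\mathcal{B}''$ is an abelian von Neumann algebra with a cyclic vector, hence maximal abelian. The map $T\mapsto T|_{V^K}$ embeds $\pi(G)'$ injectively and multiplicatively into $\mathcal{B}'=\mathcal{B}''$, so $\pi(G)'$ is abelian.

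With this repair, Steps~2 and~3 are fine. In particular, the conull Borel set you build in Step~3 handles the measurability of $\Omega$.

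The paper itself does not prove this Fact; it only cites Faraut. The standard argument for Gelfand pairs decomposes the positive-definite spherical function $g\mapsto(\pi(g)v,v)$ by the Bochner--Godement theorem and needs no type~I hypothesis. Your route instead uses the type~I multiplicity theory on $\widehat{G}$, which the stated hypotheses make available.
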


The following definition is based on \cite{JFaraut2008}.

\begin{Definition}[Spectral measure of spherical representations]
Under the assumptions and notation of Fact~\ref{fact:spectralmeasurespherical},
we call $m$ a spectral measure of $\pi$.
\end{Definition}

The following fact is a consequence of standard results on
direct integrals and multiplicity-free unitary representations
of locally compact groups; see, e.g., \cite{JDixmier1981}.
Since we could not find a reference stating it in the precise
form needed here, we provide a proof in
\ref{sec:proofofsphericalcriteria} for completeness.

\begin{Fact}
\label{fact:sphericalcriteria}
Let $G$ be a second countable locally compact group of type I,
let $K$ be a compact subgroup of $G$,
let $\widehat{G}^{K\text{-}\mathrm{sph}}$ denote $\{ \xi \in \widehat{G} \mid \text{$\xi$
has a nonzero $K$-fixed vector}\}$.
Assume that $\widehat{G}^{K\text{-}\mathrm{sph}}$ is a Borel subset of $\widehat{G}$.
Let $\pi$ be a multiplicity-free unitary representation of $G$ such that
$\pi$ admits a direct integral decomposition
$\pi \simeq \int_{\widehat{G}^{K\text{-}\mathrm{sph}}}^\oplus \pi_\omega dm(\omega)$
for some positive finite regular Borel measure $m$ on $\widehat{G}^{K\text{-}\mathrm{sph}}$.
Then $\pi$ has a $K$-fixed cyclic vector.
\end{Fact}

\subsection{Unitary representations of $D_\infty$}
\label{subsec:preliminaryinfinitedihedralgroup}

We first recall the irreducible unitary representations of $D_\infty$.

\begin{Definition}[Irreducible representations of $D_\infty$]
Define unitary representations $\iota_{+,+}$, $\iota_{+,-}$, $\iota_{-,+}$, $\iota_{-,-}$ of $D_\infty$
on $\mathbb{C}$
by
$\iota_{+,\pm}(g_1)=1$, $\iota_{-,\pm}(g_1)=-1$,
$\iota_{\pm,+}(g_2)=1$, $\iota_{\pm, -}(g_2)=-1$.

Define a unitary representation $\eta_\theta$ $(\theta \in (0,\pi))$
 of $D_\infty$
on $\mathbb{C}^2$
by
\begin{equation*}
\eta_\theta(g_1)=
\begin{pmatrix}
1 & 0\\
0 & -1
\end{pmatrix},\quad
\eta_\theta(g_2)=
\begin{pmatrix}
\cos\theta & \sin\theta\\
\sin\theta & -\cos\theta
\end{pmatrix}.
\end{equation*}
\end{Definition}

The following fact is a standard consequence of the general
theory of unitary representations of $D_\infty$;
see, e.g., \cite{EThoma1964, ABottcher2010, HBehncke1970, HBehncke1971}.

\begin{Fact}[Classification of irreducible unitary representations of $D_\infty$]
\label{fact:behncke}
The group $D_\infty$ is of type I, and
\begin{equation*}
\widehat{D_\infty}=\{\iota_{+,+},\iota_{+,-},\iota_{-,+},\iota_{-,-}\}
\cup \{\eta_\theta \mid \theta \in (0,\pi)\}.
\end{equation*}
\end{Fact}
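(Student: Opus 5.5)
Since $D_\infty$ is the standard example of a finitely generated group with an abelian subgroup $\mathbb{Z}=\langle g_2g_1\rangle$ of index two, the plan is to apply Mackey's theory of induction from an index-two normal subgroup. This gives both the type I property and the list of irreducibles. The restriction of an irreducible unitary representation $\xi$ of $D_\infty$ to $\mathbb{Z}$ is a finite sum of characters $\chi_z$, $z\in S^1$, forming a single orbit under the action of $g_1$. Conjugation by $g_1$ sends $g_2g_1$ to $g_1g_2=(g_2g_1)^{-1}$, so this action is $z\mapsto \bar z$. There are two cases. If $z\neq\bar z$, then $\xi\simeq \operatorname{Ind}_{\mathbb{Z}}^{D_\infty}\chi_z$, which is two-dimensional and irreducible. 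If $z=\pm1$, the character $\chi_z$ extends to $D_\infty$ in exactly two ways, since $g_1$ may act by $\pm1$, and these give one-dimensional representations.

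For type I, I would argue that every irreducible representation is finite-dimensional of dimension at most two. Indeed, by Schur, $\pi(g_2g_1)+\pi(g_1g_2)$ is central in $\pi(D_\infty)''$, hence a scalar $2\cos\theta$ for irreducible $\pi$. Then $\pi(g_2g_1)$ satisfies $u^2-2\cos\theta\,u+1=0$, so the span of $v$ and $\pi(g_1)v$ is invariant for any vector $v$, and irreducibility forces $\dim\le 2$. A group all of whose irreducible representations have uniformly bounded finite dimension has a $C^*$-algebra satisfying a polynomial identity, so it is type I (Kaplansky, Thoma). Alternatively, I would cite the Mackey machine directly: $\mathbb{Z}$ is abelian and the orbit space $S^1/(z\sim\bar z)\cong[0,\pi]$ is countably separated. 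This is precisely Mackey's regular-embedding criterion, so $D_\infty$ is type I.

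For the classification, take an irreducible $\pi$ and the scalar $2\cos\theta$ with $\theta\in[0,\pi]$. If $\dim\pi=1$, then $\pi(g_1),\pi(g_2)\in\{\pm1\}$, giving the four $\iota_{\pm,\pm}$. If $\dim\pi=2$, write $P=(\operatorname{id}+\pi(g_1))/2$ and $Q=(\operatorname{id}+\pi(g_2))/2$, which are rank-one projections; a rank-zero or rank-two projection would make the representation reducible. Choose an orthonormal basis with $P=\operatorname{diag}(1,0)$. The range of $Q$ is spanned by $(\cos(\theta/2),\sin(\theta/2))$ up to phases, and conjugating by a diagonal unitary lets us assume the entries are real and nonnegative. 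This gives exactly $\eta_\theta(g_2)$, and a direct computation shows $\operatorname{tr}\eta_\theta(g_2g_1)=2\cos\theta$. The cases $\theta\in\{0,\pi\}$ make $P$ and $Q$ commute, so the representation is reducible; hence $\theta\in(0,\pi)$. Irreducibility of $\eta_\theta$ for $\theta\in(0,\pi)$ holds because the two projections have no common eigenvector. Mutual inequivalence of the $\eta_\theta$ follows from the distinct traces of $g_2g_1$. Each $\eta_\theta$ is also inequivalent to the $\iota$'s by dimension, and the $\iota$'s are pairwise inequivalent by their values on $g_1,g_2$.

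The only genuinely nontrivial point is the type I assertion. I expect this to be handled by citation (Thoma, or Mackey's criterion) rather than reproved, with the bounded-dimension argument above as the justification.
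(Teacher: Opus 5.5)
The paper gives no proof of this Fact; it cites Thoma, B\"ottcher--Spitkovsky and Behncke. Your self-contained route (Schur's lemma applied to the central element $\pi(g_2g_1)+\pi(g_1g_2)$, a dimension bound, then two projections in $\mathbb{C}^2$) could replace that citation. However, the step that carries it is false as stated: the span of $v$ and $\pi(g_1)v$ is not invariant for arbitrary $v$. Invariance under $\pi(g_2)=\pi(g_2g_1)\pi(g_1)$ would require $\pi(g_2g_1)v$ and $\pi(g_2g_1)\pi(g_1)v$ to lie in that span, and the relation $u^2-2\cos\theta\,u+1=0$ does not give this. Concretely, in $\eta_\theta$ with $v=e_1$ the span is $\mathbb{C}e_1$, while $\eta_\theta(g_2)e_1=(\cos\theta,\sin\theta)^{T}\notin\mathbb{C}e_1$. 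Using $\pi(g_2g_1)v$ in place of $\pi(g_1)v$ fails too: $(1,-i)^{T}$ is an eigenvector of the rotation $\eta_\theta(g_2g_1)$, and $\eta_\theta(g_1)$ does not preserve its line. Your bounded-dimension proof of type I and the case split $\dim\pi\in\{1,2\}$ in your classification both rest on this bound, so it must be repaired.

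The repair is to choose $v$ adapted to $\pi(g_1)$. Take $v\neq0$ with $\pi(g_1)v=\epsilon v$, $\epsilon\in\{1,-1\}$, which exists because $\pi(g_1)$ is a self-adjoint involution, and put $W=\operatorname{span}\{v,\pi(g_2)v\}$. Then $W$ is $\pi(g_2)$-invariant because $\pi(g_2)^2=\operatorname{id}$. It is also $\pi(g_1)$-invariant, because $\pi(g_2g_1)v=\epsilon\,\pi(g_2)v$ and
\[
\pi(g_1)\pi(g_2)v=\pi(g_2g_1)^{-1}v=2\cos\theta\,v-\pi(g_2g_1)v=2\cos\theta\,v-\epsilon\,\pi(g_2)v .
\]
Thus $W$ is a nonzero finite-dimensional, hence closed, invariant subspace, and irreducibility gives $\dim\pi\le2$.

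With this in place the rest of your proposal is correct:
\begin{itemize}
\item Finite-dimensionality of all irreducibles makes $C^*(D_\infty)$ liminal, hence type I; no polynomial-identity theorem is needed.
\item It justifies your claim that $\xi|_{\mathbb{Z}}$ is a finite sum of characters, which for an a priori infinite-dimensional $\xi$ would otherwise need a Clifford-type argument.
\item Your normalization of two rank-one projections in $\mathbb{C}^2$ correctly produces $\eta_\theta$, $\theta\in(0,\pi)$, alongside the four characters.
\end{itemize}
One small point on your alternative route: regular embedding of $\mathbb{Z}$ is not by itself Mackey's criterion. You also need the little groups (stabilizers modulo $\mathbb{Z}$) to be type I. This is immediate here, since they are trivial or of order two.
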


The following fact follows from elementary calculations.

\begin{Fact}
\label{fact:sphericaldual}
$(D_\infty,\langle g_1 \rangle)$ is a Gelfand pair, and
the spherical dual of $(D_\infty,\langle g_1 \rangle)$ is
$\widehat{D_\infty}\setminus \{\iota_{-,+},\iota_{-,-}\}$.
\end{Fact}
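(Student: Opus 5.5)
We need to show two things: that $\langle g_1\rangle\cong\mathbb{Z}/2$ gives a Gelfand pair, and that the irreducible representations with a nonzero $\langle g_1\rangle$-fixed vector are exactly those in Fact~\ref{fact:behncke} other than $\iota_{-,+}$ and $\iota_{-,-}$. The subgroup $K=\langle g_1\rangle=\{e,g_1\}$ is finite, hence compact. For an irreducible representation $\xi$, the $K$-fixed subspace is the $+1$-eigenspace of $\xi(g_1)$, since $\xi(g_1)$ is a unitary involution. By Fact~\ref{fact:behncke}, it suffices to compute this eigenspace for each element of $\widehat{D_\infty}$.

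\textbf{Spherical dual.} For the one-dimensional representations, $\iota_{+,\pm}(g_1)=1$, so the fixed space is all of $\mathbb{C}$. For $\iota_{-,\pm}$ we have $\iota_{-,\pm}(g_1)=-1$, so the fixed space is $\{0\}$. For $\eta_\theta$, the matrix $\eta_\theta(g_1)=\mathrm{diag}(1,-1)$ has fixed space $\mathbb{C}e_1$, which is one-dimensional. Hence the irreducible spherical representations are exactly $\widehat{D_\infty}\setminus\{\iota_{-,+},\iota_{-,-}\}$. In every case the fixed space has dimension at most one.

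\textbf{Gelfand property.} I would use the criterion for a compact subgroup $K$: $(G,K)$ is a Gelfand pair if and only if every irreducible unitary representation has a $K$-fixed space of dimension at most one. Equivalently, the convolution algebra of $K$-bi-invariant functions is commutative. The computation above gives the dimension bound.

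For a self-contained check, I would instead verify commutativity of the Hecke algebra $\ell^1(K\backslash G/K)$ directly. Write $t=g_2g_1$; then every element of $D_\infty$ is $t^n$ or $t^n g_1$, and $g_1 t g_1=t^{-1}$. The double cosets are therefore $K t^n K=\{t^n,t^{-n},t^ng_1,t^{-n}g_1\}$ for $n\ge 0$.

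Use the involution $\sigma(x)=x^{-1}$, which is an anti-automorphism of $G$. Gelfand's trick then gives commutativity once we check that every double coset is $\sigma$-stable:
\begin{itemize}
\item $(t^n)^{-1}=t^{-n}$, which lies in the same coset;
\item $(t^n g_1)^{-1}=g_1 t^{-n}=t^n g_1$, so $t^n g_1$ is fixed by $\sigma$.
\end{itemize}
Consequently every bi-invariant function $f$ satisfies $f(x^{-1})=f(x)$. The standard computation $f*h=(f*h)^\sigma=h^\sigma*f^\sigma=h*f$ then yields commutativity.

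The only mildly delicate point is matching the paper's notion of Gelfand pair to one of these criteria. Since $K$ is finite, both formulations apply directly, and the argument presents no real obstacle.
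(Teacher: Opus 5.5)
Your proposal is correct and is essentially the ``elementary calculation'' the paper invokes without giving details. Reading off the $+1$-eigenspace of $\xi(g_1)$ for each $\xi$ in Fact~\ref{fact:behncke} gives the spherical dual; for irreducible $\xi$ any nonzero fixed vector is automatically cyclic, which is the small point connecting this to the paper's definition of spherical. Your Gelfand-trick check that each double coset $Kt^nK=\{t^{\pm n},t^{\pm n}g_1\}$ is inversion-stable settles commutativity of the bi-invariant convolution algebra directly. Of your two routes to the Gelfand property, the self-contained Hecke-algebra check is the better one, since the converse direction of the multiplicity-at-most-one criterion relies on the Gelfand--Raikov theorem rather than on elementary calculation.
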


The following characterization was established for
infinite-dimensional representations in \cite{YFujii2026},
and the same argument applies to finite-dimensional
representations.

\begin{Theorem}
\label{theorem:sphericalrepresentation}
Let $\pi$ be a unitary representation of $D_\infty$.
Then $\pi$ is a spherical representation of $(D_\infty,\langle g_1 \rangle)$
if and only if $\pi$ is unitarily equivalent to an OSF representation.
\end{Theorem}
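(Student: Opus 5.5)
Theorem~\ref{theorem:sphericalrepresentation} has two directions, and I will prove them separately. Throughout, $K=\langle g_1\rangle=\{e,g_1\}$.

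\emph{OSF $\Rightarrow$ spherical.} Let $C=LM$ be the natural factorization of a real CMV matrix on $\ell^2$ (or on $\mathbb{C}^n$) with standard basis $\delta_0,\delta_1,\dots$. Set $\pi(g_1)=M$ and $\pi(g_2)=L$. The structure of the factorization gives $M=1\oplus\Theta_1\oplus\Theta_3\oplus\cdots$, so $M\delta_0=\delta_0$ and $\delta_0$ is $K$-fixed. By the standard CMV theory, $\delta_0$ is cyclic for $C$, meaning the span of $\{C^k\delta_0\}_{k\in\mathbb{Z}}$ is dense. Since $C=\pi(g_2g_1)$, $\delta_0$ is then cyclic for $\pi(D_\infty)$. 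Sphericity is invariant under unitary equivalence, and the finite-dimensional and zero cases are identical. This direction is routine.

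\emph{Spherical $\Rightarrow$ OSF.} Let $v$ be a unit $K$-fixed cyclic vector for $\pi$, and put $U=\pi(g_2g_1)$.

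First, $v$ is cyclic for $U$ alone. Every element of $D_\infty$ has the form $u^k$ or $u^kg_1$ with $u=g_2g_1$. Since $\pi(g_1)v=v$, we get $\pi(D_\infty)v=\{U^kv\}_{k\in\mathbb{Z}}$.

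Second, the spectral measure $\mu$ of $U$ with respect to $v$ is conjugation-symmetric. We have $g_1ug_1=u^{-1}$, so
\[
\langle U^k v,v\rangle=\langle \pi(g_1)U^{-k}\pi(g_1)v,v\rangle=\langle U^{-k}v,v\rangle .
\]
Hence all moments of $\mu$ are real, and $\mu$ is invariant under $z\mapsto\bar z$. Thus the Verblunsky coefficients $\alpha_n$ of $\mu$ are real. This holds both when $\mu$ has infinite support and when it is finitely supported, where the sequence terminates with $|\alpha_{n-1}|=1$.

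Third, build the model. Apply Gram--Schmidt to the CMV ordering $1,z,z^{-1},z^2,\dots$ in $L^2(S^1,\mu)$. This gives the CMV basis $\chi_j$, in which multiplication by $z$ is the real CMV matrix $C=LM$ of $\mu$. The map $W:V\to L^2(\mu)$, $W U^kv=z^k$, extends to a unitary intertwining $U$ with $M_z$. Pull the basis back to $V$ via $W^{-1}$.

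It remains to show that $W\pi(g_1)W^{-1}=M$ and $W\pi(g_2)W^{-1}=L$. Then $\pi(g_2)=U\pi(g_1)$ follows automatically. Define the antiunitary-free involution $R$ on $L^2(\mu)$ by $(Rf)(z)=f(\bar z)$; it is unitary because $\mu$ is symmetric. Since $\pi(g_1)U^kv=U^{-k}v$, we have $W\pi(g_1)W^{-1}=R$. So the remaining task is to identify $R$ with $M$ in the CMV basis. This is exactly the known fact from OPUC with real coefficients~\cite{BSimon2005p1}: for real $\alpha$, the substitution $z\mapsto\bar z$ acts block-diagonally on the pairs $\{\chi_{2k-1},\chi_{2k}\}$. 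On each pair it acts as the $2\times2$ block $\Theta_{2k-1}$, and it fixes $\chi_0$. To verify this:
\begin{enumerate}
\item[(a)] Use that $\chi_{2k-1}$ and $\chi_{2k}$ are built from $z^{-k+1}\varphi_{2k-1}$ and $z^{-k}\varphi_{2k}^*$.
\item[(b)] Check that $R$ preserves each span $\operatorname{span}\{z^{-k},\dots,z^{k}\}$.
\item[(c)] Conclude that $R$ preserves the two-dimensional orthogonal complements $\mathcal{L}_{2k}\ominus\mathcal{L}_{2k-2}$.
\item[(d)] Compute the resulting $2\times2$ matrix using the Szeg\H{o} recursion with real coefficients.
\end{enumerate}

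\emph{Main obstacle.} The crux is this last identification. It is mostly bookkeeping of the Szeg\H{o} recursion and sign conventions, and I expect it to be the main obstacle. The degenerate cases also need care:
\begin{itemize}
\item When $\mu$ is finitely supported, the final block is one-dimensional, given by $\pm1$ according to $\alpha_{n-1}=\pm1$. One must check that this still fits the definition of the natural factorization of a finite real CMV matrix.
\item When $\pi=0$, the statement holds by the convention adopted above.
\end{itemize}
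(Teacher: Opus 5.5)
Your proposal is correct. The paper does not prove this theorem in the text: it cites \cite{YFujii2026} for the infinite-dimensional case and states that the same argument covers finite dimensions. Your argument is therefore a self-contained substitute, and it handles both cases uniformly, the only difference being whether $\mu$ has finite or infinite support.

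Its structure is the paper's functional realization, Fact~\ref{fact:osffunctionalrealization}, run backwards. The ingredients are: $v$ is cyclic for $U$, $\mu$ is conjugation-invariant because $g_1ug_1=u^{-1}$, and $W\pi(g_1)W^{-1}=R$ on $L^2(S^1,\mu)$. In fact, the step you flag as the main obstacle is exactly that Fact. Applied to the real CMV matrix $C$ of $\mu$, it says the OSF representation of $C$ is equivalent to the pair $(M_{\boldsymbol{z}},c)$ on $L^2(S^1,\mu)$, where $c=R$. You have shown $\pi$ is equivalent to the same pair, so $\pi$ is equivalent to an OSF representation with no further computation.

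Note also that the alternating orbit $v,\pi(g_2)v,\pi(g_1g_2)v,\pi(g_2g_1g_2)v,\dots$ is exactly $v,Uv,U^{-1}v,U^2v,\dots$. So pulling back the CMV basis is the same as running Gram--Schmidt directly in $V$ along alternating words. What the $L^2$ model buys you is the explicit formula for $R$ and access to the Szeg\H{o} recursion for the block computation.

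If you prefer to check (d) by hand, it is one line. For real coefficients, on $S^1$ one has $(R\chi_{2k-1})(z)=z^{k-1}\overline{\varphi_{2k-1}(z)}=z^{-k}\varphi^*_{2k-1}(z)$. The Szeg\H{o} recursion in the form $\varphi^*_{2k-1}=\rho_{2k-1}\varphi^*_{2k}+\alpha_{2k-1}z\varphi_{2k-1}$ then gives $R\chi_{2k-1}=\alpha_{2k-1}\chi_{2k-1}+\rho_{2k-1}\chi_{2k}$. Since $R$ is a self-adjoint unitary on $\operatorname{span}\{\chi_{2k-1},\chi_{2k}\}$, this first column forces the block to be $\Theta_{2k-1}$.

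In the finite case, write $N=\dim V$.
\begin{itemize}
\item If $N$ is even, use the monic recursion $\Phi^*_{N}=\Phi^*_{N-1}-\alpha_{N-1}z\Phi_{N-1}$. Since $\Phi_N=0$ in $L^2(S^1,\mu)$, also $\Phi^*_N=0$ there, so $\varphi^*_{N-1}=\alpha_{N-1}z\varphi_{N-1}$. Hence $R\chi_{N-1}=\alpha_{N-1}\chi_{N-1}$, which matches the terminal $1\times1$ block of $M$.
\item If $N$ is odd, the terminal $1\times1$ block belongs to $L=CM$ and needs no separate check.
\end{itemize}
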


The following fact is a reformulation of a well-known result in the theory of
orthogonal polynomials on the unit circle \cite{BSimon2005p1}.
Note that every nonzero CMV matrix has $e_1$ as a cyclic vector,
where $e_1$ is the vector in the corresponding Hilbert space
whose first component is $1$ and whose other components are $0$.

\begin{Fact}[Functional realization of OSF representations]
\label{fact:osffunctionalrealization}
Let $(\pi,V)$ be a nonzero OSF representation
and let $\mu$ be a spectral measure of $\pi(g_2g_1)$ with respect to $e_1$.
Then there exists a unitary intertwining operator $\Phi:V \to L^2(S^1,\mu)$ such that
\begin{equation*}
\Phi \circ \pi(g_2g_1) \circ \Phi^{-1}=M_{\boldsymbol{z}},
\quad
\Phi \circ \pi(g_1) \circ \Phi^{-1} = c,
\end{equation*}
where $\boldsymbol{z}$ is defined by $\boldsymbol{z}(z)=z$ for $z\in S^1$,
and $c$ is defined by $[c(f)](z)=f(\bar{z})$ $(f \in L^2(S^1,\mu),\ z \in S^1)$.
\end{Fact}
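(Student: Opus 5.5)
We will prove the statement (the functional realization of OSF representations, Fact~\ref{fact:osffunctionalrealization}) directly from the structure of the natural factorization $C=LM$ of a real CMV matrix. The plan has three steps.

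\textbf{Step 1: construct $\Phi$.} Since $e_1$ is cyclic for $C=\pi(g_2g_1)$ and $\mu$ is the spectral measure of $C$ with respect to $e_1$, the spectral theorem gives a unique unitary $\Phi:V\to L^2(S^1,\mu)$ with $\Phi(C^n e_1)=\boldsymbol{z}^n$ for all $n\in\mathbb{Z}$. It is isometric because
\begin{equation*}
\langle C^n e_1, C^m e_1\rangle=\int_{S^1} z^{n}\bar z^{m}\,d\mu(z).
\end{equation*}
It is onto because Laurent polynomials are dense in $L^2(S^1,\mu)$. By construction $\Phi C\Phi^{-1}=M_{\boldsymbol{z}}$.

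\textbf{Step 2: compute the action of $M$ on the orbit of $e_1$.} It remains to show that $M=\pi(g_1)$ corresponds to $c$. Both are bounded, so it suffices to check $\Phi M C^n e_1 = c(\boldsymbol{z}^n)=\boldsymbol{z}^{-n}$ for all $n$, i.e.
\begin{equation*}
M C^n e_1=C^{-n}e_1 .
\end{equation*}
Two structural facts about the natural factorization drive this. First, $M$ and $L$ are involutions, so $MCM=ML=C^{-1}$. Second, $Me_1=e_1$: in the standard CMV form $M=1\oplus\Theta_1\oplus\Theta_3\oplus\cdots$, so the first basis vector is fixed. (This is exactly why $\langle g_1\rangle$ fixes $e_1$, matching the spherical picture of Theorem~\ref{theorem:sphericalrepresentation}.) Combining them gives
\begin{equation*}
MC^ne_1=MC^nM e_1=(MCM)^n e_1=C^{-n}e_1 .
\end{equation*}

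\textbf{Step 3: check that $c$ is well defined.} The map $c$ must be a well-defined unitary on $L^2(S^1,\mu)$, which requires $\mu$ to be invariant under $z\mapsto\bar z$. This holds because $C$ is real:
\begin{equation*}
\int z^n\,d\mu=\langle C^n e_1,e_1\rangle\in\mathbb{R},
\end{equation*}
so the moments of $\mu$ and of its conjugate coincide. Alternatively, invariance follows from Step 2, since $M$ is unitary and $\langle MC^ne_1,MC^me_1\rangle=\langle C^{-n}e_1,C^{-m}e_1\rangle$.

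The only potential obstacle is confirming the convention $Me_1=e_1$ rather than $Le_1=e_1$, and handling finite-dimensional OSF representations, where $M$ ends with a $1\times1$ block. In every case the first block of $M$ is $1$, so the argument above goes through uniformly.
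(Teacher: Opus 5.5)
Your proof is correct, and it takes a different route from the one the paper relies on. The paper gives no argument for this Fact. It calls it a reformulation of a known OPUC result and cites Simon.

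Behind that citation is the CMV-basis picture. The unitary $e_{n+1}\mapsto\chi_n$ carries $C$ to $M_{\boldsymbol z}$. The factor $M$ is the change-of-basis matrix between the CMV basis $(\chi_n)$ and the alternate basis $(x_n)$, where $x_n=\overline{\chi_n}$ on $S^1$. For real Verblunsky coefficients $\overline{\chi_n(z)}=\chi_n(\bar z)$, so $M$ becomes $c$. Conjugation invariance of $\mu$ is likewise quoted from Simon, as in the proof of Theorem~\ref{theorem:dinftyunitaryrepresentationdecomposition}.

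You avoid orthogonal polynomials altogether. You use only $M^2=L^2=\operatorname{id}_V$, $Me_1=e_1$, and cyclicity of $e_1$, through $MC^nM=C^{-n}$. Your second argument for conjugation invariance does not even use that $C$ is real. So your proof applies verbatim to any unitary representation of $D_\infty$ with a $\pi(g_1)$-fixed vector $v$ that is cyclic for $\pi(g_2g_1)$. Equivalently, it applies to any representation with a cyclic $\langle g_1\rangle$-fixed vector, since $\pi(D_\infty)v=\{C^nv\}$ for such $v$. This fits the spherical viewpoint of Theorem~\ref{theorem:sphericalrepresentation} better than the citation does.

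What the OPUC route adds is an explicit description of $\Phi$ on the standard basis, $\Phi e_{n+1}=\chi_n$. Your $\Phi$ is the same operator, since both are determined by $\Phi e_1=1$ and $\Phi C=M_{\boldsymbol z}\Phi$. That link between matrix entries and orthogonal polynomials is what results like Proposition~\ref{proposition:dinftyrestrictionmutiplicityfreeverblunsky} draw on.

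One cosmetic remark: in the finite case the $1\times1$ block $\bar\alpha_{N-1}$ closes $M$ only when $N$ is even; it closes $L$ when $N$ is odd. As you say, only the leading $1$ of $M$ matters.
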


\section{Unique OSF decomposition for $D_\infty$}
\label{sec:uniqueosfdecomposition}

We establish the following unique discrete direct-sum decomposition
of unitary representations of $D_\infty$ into OSF representations
and copies of the two exceptional one-dimensional representations
$\iota_{-,+}$ and $\iota_{-,-}$,
in terms of the mutual singularity of the spectral measures of the corresponding
real CMV matrices. 

\begin{Theorem}[Unique OSF decomposition for $D_\infty$]
\label{theorem:dinftyunitaryrepresentationdecomposition}
Let $(\pi,V)$ be a unitary representation of $D_\infty$.
Then there exist a sequence
$\rho_1,\rho_2,\dots,\rho_\infty$ of OSF representations
and a pair $(\iota_+,\iota_-)$ of unitary representations,
uniquely determined up to unitary equivalence,
satisfying the following conditions:
\begin{itemize}
\item The following decomposition holds:
\begin{equation*}
\pi \simeq \bigoplus_{i=1}^\infty i\rho_i \oplus \infty \rho_\infty \oplus \iota_+ \oplus \iota_-,
\end{equation*}
where $i \rho$ $(i \in \mathbb{N} \cup \{\infty\})$
is the discrete direct sum of $i$ copies of $\rho$.

\item For each nonzero $\rho_i$, let $\mu_i$ denote
the spectral measure of the real CMV matrix $\rho_i(g_2g_1)$ with respect to $e_1$.
Then these measures are mutually singular.

\item $\iota_+$ and $\iota_-$ are at most countable discrete direct sums
of copies of $\iota_{-,+},\iota_{-,-}$, respectively.
\end{itemize}
\end{Theorem}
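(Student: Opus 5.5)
The plan is to translate everything into the language of spectral multiplicity theory for the type I group $D_\infty$, using the parametrization of $\widehat{D_\infty}$ from Fact~\ref{fact:behncke}. By standard direct integral theory for type I groups (Dixmier), $\pi$ is determined up to equivalence by a measure class $[\nu]$ on $\widehat{D_\infty}$ together with a multiplicity function $n:\widehat{D_\infty}\to \mathbb{N}\cup\{\infty\}$. Equivalently, $\pi \simeq \bigoplus_{i} i\,\sigma_i \oplus \infty\sigma_\infty$ with each $\sigma_i$ multiplicity free and their measure classes $[\nu_i]$ mutually singular; this decomposition is unique up to equivalence. We split $\widehat{D_\infty}=\Omega\sqcup\{\iota_{-,+},\iota_{-,-}\}$, with $\Omega$ the spherical dual (Fact~\ref{fact:sphericaldual}), which is Borel since its complement is finite. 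The atoms at $\iota_{-,\pm}$ with their multiplicities give $\iota_\pm$. The restriction of $\nu_i$ to $\Omega$ gives a multiplicity-free representation $\rho_i$ that is a direct integral over $\Omega$.

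\textbf{Existence.} By Fact~\ref{fact:sphericalcriteria} (with $K=\langle g_1\rangle$), each such $\rho_i$ has a $\langle g_1\rangle$-fixed cyclic vector. Hence each $\rho_i$ is spherical, and by Theorem~\ref{theorem:sphericalrepresentation} it is unitarily equivalent to an OSF representation.

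\textbf{Mutual singularity.} We must relate $\mu_i$ (on $S^1$) to the spectral measure $m_i$ of $\rho_i$ on $\Omega$. Using Fact~\ref{fact:osffunctionalrealization}, $\pi(g_1)$ acts as $f\mapsto f(\bar z)$, so one may take $\mu_i$ invariant under conjugation; indeed the CMV matrix is real. The natural map $p:S^1_+\to\Omega$ is given by
\begin{equation*}
e^{i\theta}\mapsto \eta_\theta \quad (0<\theta<\pi),\qquad 1,\,-1\mapsto \text{the two spherical characters } \iota_{+,\pm}.
\end{equation*}
Decompose $L^2(S^1,\mu_i)$ into the even and odd parts under $z\mapsto\bar z$. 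This shows that $\rho_i$ is the direct integral of $\eta_\theta$ over $p_*(\mu_i|_{S^1_+})$, plus the atoms at $\pm1$. This is essentially the content of Theorem~\ref{theorem:introductionexplicitdirectintegralosf}, and I would prove it here as a lemma. The consequence is that $\mu_k\perp\mu_l$ if and only if $p_*(\mu_k|_{S^1_+})\perp p_*(\mu_l|_{S^1_+})$. This uses conjugation invariance and the injectivity of $p$, so that null sets on $S^1_+$ reflect to null sets on all of $S^1$. Singularity of the $\nu_i$ therefore transfers to the $\mu_i$.

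\textbf{Uniqueness.} Suppose a decomposition as in the statement is given. Each $\rho_i$ is spherical, hence multiplicity free by Fact~\ref{fact:spectralmeasurespherical}, since its commutant is abelian. By the lemma, its measure class on $\widehat{D_\infty}$ is $[p_*(\mu_i|_{S^1_+})]$, supported in $\Omega$. The $\iota_\pm$ are supported on the two exceptional points, which carry no mass from any $\rho_i$. Mutual singularity of the $\mu_i$ then gives singularity of the corresponding measures on $\widehat{D_\infty}$. Hence the multiplicity function of $\pi$ equals $i$ on the support of $\rho_i$, and equals the number of copies at $\iota_{-,\pm}$. By the uniqueness of multiplicity theory, the measure classes $[p_*(\mu_i|_{S^1_+})]$ and the numbers of copies of $\iota_{-,\pm}$ are determined by $\pi$. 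Since a multiplicity-free direct integral is determined by its measure class, each $\rho_i$ is determined up to equivalence. The zero cases ($\rho_i=0$) are handled by the conventions.

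\textbf{Expected obstacle.} The main technical step is the lemma identifying the measure class of an OSF representation on $\widehat{D_\infty}$ with $p_*(\mu|_{S^1_+})$. This requires care at the points $\pm1$, where the representation is one-dimensional and one must check which of $\iota_{+,\pm}$ occurs: at $z=\pm1$, $g_2=g_2g_1\cdot g_1$ acts by $\pm1$ on the fixed vector. It also requires verifying that the even/odd splitting intertwines with the $\eta_\theta$ matrices.
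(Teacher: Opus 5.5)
Your proposal is correct and follows essentially the same route as the paper. The common ingredients are type I multiplicity theory with the exceptional points $\iota_{-,\pm}$ split off, Fact~\ref{fact:sphericalcriteria} together with Theorem~\ref{theorem:sphericalrepresentation} for existence, and the identification of the measure class of an OSF representation with $\gamma_*(\mu|S^1_+)$, combined with conjugation invariance to transfer mutual singularity in both directions. The only difference is how you prove that identification (the paper's Theorem~\ref{theorem:explicitdirectintegralosf}): you use an explicit even/odd splitting of $L^2(S^1,\mu)$, which works because the resulting fibre matrices are conjugate to $\eta_\theta$ by a $\theta$-independent diagonal unitary, whereas the paper compares the spectral measure of $(C+C^*)/2$ in the two models using cyclicity of $e_1$.
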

\begin{proof}
Since $D_\infty$ is a type I group (Fact~\ref{fact:behncke}),
$\pi$ admits a unique decomposition into
mutually disjoint multiplicity-free representations
grouped according to their multiplicities
(see, e.g., \cite[Theorem~7.32]{GBFolland2016}).

From Fact~\ref{fact:sphericaldual}, all irreducible
unitary representations of $D_\infty$, except
$\iota_{-,+}$ and $\iota_{-,-}$, are spherical.
Separating these two exceptional representations,
Fact~\ref{fact:sphericalcriteria} and
Theorem~\ref{theorem:sphericalrepresentation}
show that each remaining multiplicity-free
component is unitarily equivalent to an OSF representation.

From Theorem~\ref{theorem:explicitdirectintegralosf},
a spectral measure of each nonzero OSF component
on the unitary dual is equivalent to the
push-forward of $\mu|S^1_+$ by $\gamma$, 
where $\mu$ is the spectral measure of
its corresponding CMV matrix with respect to $e_1$.

The spectral measures of nonzero real CMV matrices with respect to $e_1$
are conjugate-invariant (see, e.g., \cite{BSimon2005p1});
that is, $\mu(E)=\mu(\bar{E})$ for every Borel subset
$E$ of $S^1$, where $\bar{E}=\{\bar{z} \mid z\in E\}$.
Since $\gamma$ is a homeomorphism onto the
spherical dual (Theorem~\ref{theorem:explicitdirectintegralosf}),
mutual singularity of these measures is preserved in both directions.

Therefore, the existence and uniqueness of the asserted
decomposition follow from the uniqueness of
the decomposition described at the beginning of the proof.
\end{proof}

\section{Direct integral decomposition of OSF representations}
\label{sec:directintegraldecomposition}

The following theorem shows that a spectral measure of an OSF representation $\pi$
can be chosen as the push-forward of the spectral measure
of the real CMV matrix $\pi(g_2g_1)$ restricted to $S^1_+$
under a homeomorphism from $S^1_+$ onto the spherical dual
of $(D_\infty,\langle g_1\rangle)$.

The theory of orthogonal polynomials on the unit circle provides
a wealth of results concerning the spectral measures of CMV matrices.
For instance, when the Verblunsky coefficients form a constant sequence,
the associated spectral measure can be computed explicitly.
For details, see, e.g., \cite{BSimon2005p1,BSimon2005p2}.
Consequently, Theorem~\ref{theorem:explicitdirectintegralosf}
indicates that these tools from the theory of orthogonal polynomials on
the unit circle can be used to study the structure of OSF
representations.

\begin{Theorem}[Direct integral decomposition of OSF representations]
\label{theorem:explicitdirectintegralosf}
Let $(\pi,V)$ be a nonzero OSF representation,
let $\mu$ be the spectral measure of the real CMV matrix $\pi(g_2g_1)$ with respect to $e_1$,
and let $m$ be the push-forward measure of $\mu|S^1_+$
on $\widehat{D_\infty}$ by a homeomorphism $\gamma$
from $S^1_+$ to the spherical dual $\widehat{D_\infty}^{\mathrm{sph}}$ of
$(D_\infty,\langle g_1 \rangle)$,
where $\gamma(\pm 1)=\iota_{+,\pm}$ and
$\gamma(e^{i\theta})=\eta_\theta$ for $\theta \in (0,\pi)$.

Then the following unitary equivalence holds:
\begin{equation*}
\pi \simeq \int_{\widehat{D_\infty}}^\oplus \xi\ dm(\xi).
\end{equation*}
Furthermore, $m(\{\iota_{-,+},\iota_{-,-}\})=0$
and
\begin{equation*}
\pi(D_\infty)'=W^*(\pi(g_2g_1)+\pi(g_1g_2)).
\end{equation*}
\end{Theorem}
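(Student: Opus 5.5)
We work in the functional realization of Fact~\ref{fact:osffunctionalrealization}. Thus $V=L^2(S^1,\mu)$, $\pi(g_2g_1)=M_{\boldsymbol z}$ and $\pi(g_1)=c$, where $\mu$ is conjugate-invariant. Then $\pi(g_2)=\pi(g_2g_1)\pi(g_1)=M_{\boldsymbol z}c$.

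\textbf{Step 1: fiberwise reduction.} Split $S^1=S^1_+\cup S^1_-$ with $S^1_-=\{z:\operatorname{Im}z<0\}$, and define
\begin{equation*}
U:L^2(S^1,\mu)\to\int_{S^1_+}^\oplus H_z\,d\mu(z),\qquad (Uf)(z)=\bigl(f(z),f(\bar z)\bigr),
\end{equation*}
where $H_z=\mathbb{C}^2$ for $z\neq\pm1$ and $H_{\pm1}=\mathbb{C}$.

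On the open half circle we need the normalization $\|f\|^2=\int_{S^1_+\setminus\{\pm1\}}(|f(z)|^2+|f(\bar z)|^2)\,d\mu$. This holds by conjugate invariance of $\mu$, which maps $\mu|_{S^1_-}$ onto $\mu|_{S^1_+\setminus\{\pm1\}}$. At the points $\pm1$ the fiber is one-dimensional and carries $\mu(\{\pm1\})$ with no doubling.

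Under $U$:
\begin{itemize}
\item $c$ becomes the swap $\begin{pmatrix}0&1\\1&0\end{pmatrix}$ on fibers with $z\neq\pm1$, and the identity at $\pm1$;
\item $M_{\boldsymbol z}$ becomes $\operatorname{diag}(z,\bar z)$.
\end{itemize}
Hence $\pi(g_2)=M_{\boldsymbol z}c$ acts on the fiber at $z=e^{i\theta}$ by $\begin{pmatrix}0&z\\ \bar z&0\end{pmatrix}$, and by $\pm1$ at $z=\pm1$. So the fibers at $\pm1$ are $\iota_{+,\pm}$.

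\textbf{Step 2: identifying the fibers.} For $\theta\in(0,\pi)$ we exhibit a unitary $W_\theta$ depending measurably (indeed continuously) on $\theta$ that conjugates the pair
\begin{equation*}
\left(\begin{pmatrix}0&1\\1&0\end{pmatrix},\ \begin{pmatrix}0&e^{i\theta}\\ e^{-i\theta}&0\end{pmatrix}\right)
\end{equation*}
into $(\eta_\theta(g_1),\eta_\theta(g_2))$. This is possible because in both pairs $g_1$ has eigenvalues $\pm1$, and the product $g_2g_1$ has eigenvalues $e^{\pm i\theta}$. Concretely:
\begin{itemize}
\item diagonalize the swap using the basis $(1,1)/\sqrt2$, $(1,-1)/\sqrt2$;
\item compute $g_2$ in this basis and check that it equals $\begin{pmatrix}\cos\theta&\sin\theta\\ \sin\theta&-\cos\theta\end{pmatrix}$ up to a sign change of the second basis vector.
\end{itemize}
This routine computation, together with the measurability of $\theta\mapsto W_\theta$, yields
\begin{equation*}
\pi\simeq\int^\oplus_{S^1_+}\gamma(z)\,d\mu(z)=\int^\oplus_{\widehat{D_\infty}}\xi\,dm(\xi).
\end{equation*}
We also need $\gamma$ to be a homeomorphism onto $\widehat{D_\infty}^{\mathrm{sph}}$ (by Fact~\ref{fact:sphericaldual}) for the Fell topology, or at least a Borel isomorphism. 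Since $m$ is supported on the image of $\gamma$, we get $m(\{\iota_{-,\pm}\})=0$.

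\textbf{Step 3: the commutant.} This is the main obstacle. The fibers $\gamma(z)$ are pairwise inequivalent irreducibles, so the direct integral is multiplicity free. By the standard type I theory (cf.\ Fact~\ref{fact:spectralmeasurespherical}, or Dixmier), the commutant is therefore the algebra of diagonal operators $\{M_f: f\in L^\infty(S^1_+,\mu)\}$. Transported back to $L^2(S^1,\mu)$, these are the multiplications $M_g$ by conjugate-invariant $g\in L^\infty(S^1,\mu)$.

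It remains to show that the conjugate-invariant $L^\infty$ functions are exactly the von Neumann algebra generated by $M_{\boldsymbol z+\bar{\boldsymbol z}}=\pi(g_2g_1)+\pi(g_1g_2)$. The function $\boldsymbol z+\bar{\boldsymbol z}=2\operatorname{Re}z$ is injective on $S^1_+$ and generates the Borel $\sigma$-algebra of conjugate-invariant sets. Hence, by the Borel functional calculus, $W^*(M_{2\operatorname{Re}\boldsymbol z})=\{M_{h(2\operatorname{Re}\boldsymbol z)}: h\in L^\infty\}$, which is precisely this algebra.

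A more elementary alternative for the inclusion $\pi(D_\infty)'\subseteq W^*(\cdots)$ avoids the abstract theorem. An operator $T$ commuting with $M_{\boldsymbol z}$ lies in $\{M_{\boldsymbol z}\}'$, which is maximal abelian because $\mu$ has a cyclic vector, so $T=M_g$. Commuting with $c$ then forces $g(\bar z)=g(z)$ $\mu$-a.e. I would use this direct argument, since it bypasses any subtleties of the direct integral.
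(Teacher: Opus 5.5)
Your argument is correct, but it takes a genuinely different route from the paper. The paper never uses the $L^2(S^1,\mu)$ model in this proof. It first invokes Theorem~\ref{theorem:sphericalrepresentation} and Fact~\ref{fact:spectralmeasurespherical} to obtain an abstract measure $m'$ on $\widehat{D_\infty}^{\mathrm{sph}}$ for which $\Phi\pi(D_\infty)'\Phi^{-1}$ is the diagonal algebra. It then reads off the commutant from $\Phi(T/2)\Phi^{-1}=M_r$, where $r$ is injective. Finally, it identifies the measure class of $m'$ with that of $m$ by comparing null sets of the spectral projections $E_A(B)$ of $A=(C+C^*)/2$, using the cyclicity of $e_1$ for $C$ and the conjugate invariance of $\mu$.

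You instead fold the functional model of Fact~\ref{fact:osffunctionalrealization} over $S^1_+$, identify the fibers by hand, and compute the commutant directly from the maximality of $\{M_{\boldsymbol z}\}'$ together with the symmetry imposed by $c$. Your route buys three things:
\begin{itemize}
\item an explicit intertwiner;
\item the measure $\mu|S^1_+$ exactly, rather than only up to equivalence;
\item a self-contained, elementary commutant computation that needs no type~I direct-integral theory and no spherical-function machinery.
\end{itemize}
Its cost is that everything rests on the functional realization imported from OPUC theory. The paper's argument is model-free, uses only the cyclic vector $e_1$, and fits the general Gelfand-pair framework that Section~\ref{sec:uniqueosfdecomposition} relies on.

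There is one computational slip in Step~2. In the basis $(1,1)/\sqrt2,\,(1,-1)/\sqrt2$, the fiber operator for $g_2$ is $\begin{pmatrix}\cos\theta&-i\sin\theta\\ i\sin\theta&-\cos\theta\end{pmatrix}$. Its off-diagonal entries are purely imaginary, so no sign change of a basis vector yields $\eta_\theta(g_2)$. You must instead replace the second basis vector by $i(1,-1)/\sqrt2$. This unitary does not depend on $\theta$, so the measurability required in Step~2 is automatic.
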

\begin{proof}
From elementary calculations, $\gamma$ is a homeomorphism.
Let $C$, $T$, and $A$ be defined by $C = \pi(g_2g_1)$,
$T = C + C^*$, and $A = T/2$,
and define
$r: \widehat{D_\infty}^{\mathrm{sph}} \to [-1,1]$ by
$r(\xi) = \operatorname{Re}(\gamma^{-1}(\xi))$.
From Theorem~\ref{theorem:sphericalrepresentation}
and Fact~\ref{fact:spectralmeasurespherical}, there exist a finite
positive regular Borel measure $m'$ on $\widehat{D_\infty}^{\mathrm{sph}}$
and a unitary intertwining operator
$\Phi:\pi \to \int_{\widehat{D_\infty}^{\mathrm{sph}}}^{\oplus} \xi\ dm'(\xi)$
such that
$\Phi \pi(D_\infty)' \Phi^{-1} =\{M_f \mid f \in L^\infty(\widehat{D_\infty}^{\mathrm{sph}},m')\}$.

Since
$\Phi (T/2) \Phi^{-1} = M_r$
and $r$ is a homeomorphism,
the Borel functional calculus gives
$\pi(D_\infty)' = W^*(T)$.

Let $\eta: S^1 \to [-1,1]$ be defined by
$\eta(z) = \operatorname{Re}(z)$.
We prove that $r_*m' \sim \eta_*\mu$.
Let $E_A$ denote the projection-valued spectral measure of $A$,
and let $B$ be a Borel subset of $[-1,1]$.
From the spectral resolution theorem for bounded self-adjoint operators,
\begin{equation*}
\Phi E_A(B) \Phi^{-1}=\Phi 1_B(A) \Phi^{-1}
=1_B(M_{r})=M_{1_B \circ r}=M_{1_{r^{-1}(B)}}.
\end{equation*}
Hence $(r_*m')(B)=0 \iff E_A(B)=0$.
Since $e_1$ is cyclic for $C$ and
$E_A(B)$ commutes with $C$, we obtain
$E_A(B)=0 \iff (E_A(B)e_1,e_1) = 0$.
Indeed,
since $(E_A(B)e_1,e_1) = \|E_A(B)e_1\|^2$,
if
$(E_A(B)e_1,e_1) = 0$
then $E_A(B)e_1 = 0$. Hence
$E_A(B)C^ne_1 = 0$ for every $n\in\mathbb{Z}$.
The cyclicity of $e_1$ for $C$ then gives
$E_A(B)=0$.
Since $A=\eta(C)$, we have
$(E_A(B)e_1,e_1)=(\eta_*\mu)(B)$.
It follows that
$r_*m' \sim \eta_*\mu$.

Since $\mu$ is conjugate-invariant,
$\eta_*\mu\sim\eta_*(\mu|{S^1_+})$.
Using $r\circ\gamma=\eta|{S^1_+}$
and the fact that $r$ is a homeomorphism, we obtain
\begin{equation*}
m'
\sim(r^{-1})_*(\eta_*\mu)
\sim\gamma_*(\mu|{S^1_+})
=m.
\end{equation*}
Hence
\begin{equation*}
\pi \simeq
\int_{\widehat{D_\infty}}^\oplus
\xi\ dm(\xi).
\end{equation*}
Finally,
$m(\{\iota_{-,+},\iota_{-,-}\})=0$
follows from the definition of $\gamma$.
\end{proof}

\section{Functional realization of OSF representations}
\label{sec:realizationfunctionspace}

Using the functional realization in Fact~\ref{fact:osffunctionalrealization},
we explicitly construct a unitary intertwining operator between two OSF representations of
$D_\infty$ from a unitary intertwining operator between their restrictions to $\mathbb{Z}$.

\begin{Proposition}
\label{proposition:osfintertwinerlifting}
Let $\pi_i$ $(i=1,2)$ be the functional realizations
of two nonzero OSF representations on $L^2(S^1,\mu_i)$
given by Fact~\ref{fact:osffunctionalrealization}.
Let
\begin{equation*}
U:L^2(S^1,\mu_1)\to L^2(S^1,\mu_2)
\end{equation*}
be a unitary intertwining operator from
$\pi_1|{\mathbb{Z}}$ to $\pi_2|{\mathbb{Z}}$.
Define $h=U1$.
Then the operator
\begin{equation*}
W(f)=|h|f\quad (f \in L^2(S^1,\mu_1))
\end{equation*}
is a unitary intertwining operator from
$\pi_1$ to $\pi_2$.
\end{Proposition}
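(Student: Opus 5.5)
The plan is to show that $U$ is multiplication by $h$, read off from this that $\mu_1$ and $\mu_2$ are mutually absolutely continuous with $|h|^2=d\mu_1/d\mu_2$, and then check that multiplication by the conjugation-invariant function $|h|$ commutes with both $M_{\boldsymbol{z}}$ and $c$.

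\emph{Step 1: $U$ is multiplication by $h$.} By Fact~\ref{fact:osffunctionalrealization}, $\pi_i(g_2g_1)=M_{\boldsymbol{z}}$ on $L^2(S^1,\mu_i)$, so the intertwining property reads $UM_{\boldsymbol{z}}=M_{\boldsymbol{z}}U$, and also $UM_{\bar{\boldsymbol{z}}}=M_{\bar{\boldsymbol{z}}}U$ by unitarity. Applying this to the constant function $1$ gives $U(\boldsymbol{z}^n)=\boldsymbol{z}^n h$ for all $n\in\mathbb{Z}$. By linearity, $Up=ph$ for every trigonometric polynomial $p$. Since $U$ is isometric,
\begin{equation*}
\int|p|^2\,d\mu_1=\int|p|^2|h|^2\,d\mu_2 .
\end{equation*}

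\emph{Step 2: the measures are equivalent.} Since the functions $|p|^2$ span a dense subspace of $C(S^1)$ (they include $1$, $\operatorname{Re} \boldsymbol{z}^n$ and $\operatorname{Im} \boldsymbol{z}^n$ up to linear combinations), the Riesz representation theorem gives $\mu_1=|h|^2\mu_2$. Trigonometric polynomials are dense in $L^2(S^1,\mu_1)$, so $Uf=hf$ for all $f\in L^2(S^1,\mu_1)$. Surjectivity of $U$ then forces $h\neq0$ $\mu_2$-a.e.: the set $\{h=0\}$ would otherwise support a nonzero function orthogonal to the range. Hence $\mu_1\sim\mu_2$ and $|h|^2=d\mu_1/d\mu_2$.

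\emph{Step 3: $W$ is unitary.} The computation $\int|h|^2|f|^2\,d\mu_2=\int|f|^2\,d\mu_1$ shows that $W$ is an isometry; in particular it is well defined on $\mu_1$-equivalence classes, which coincide with $\mu_2$-classes. For surjectivity, given $g\in L^2(S^1,\mu_2)$, take $f=g/|h|$; then $\int|f|^2\,d\mu_1=\int|g|^2\,d\mu_2$ and $Wf=g$. Clearly $W$ commutes with $M_{\boldsymbol{z}}$, so it intertwines $\pi_1(g_2g_1)$ and $\pi_2(g_2g_1)$.

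\emph{Step 4: $W$ intertwines $\pi_1(g_1)$ and $\pi_2(g_1)$.} Since $D_\infty$ is generated by $g_1$ and $g_2g_1$, it remains to show $Wc=cW$. We have $(Wcf)(z)=|h(z)|f(\bar z)$ and $(cWf)(z)=|h(\bar z)|f(\bar z)$, so it suffices to prove $|h(\bar z)|=|h(z)|$ $\mu_2$-a.e. Both $\mu_i$ are conjugate-invariant, as spectral measures of real CMV matrices with respect to $e_1$ (used already in the proof of Theorem~\ref{theorem:dinftyunitaryrepresentationdecomposition}). Put $\rho=d\mu_1/d\mu_2$. For every Borel set $E$,
\begin{equation*}
\int_E\rho(\bar z)\,d\mu_2(z)=\int_{\bar E}\rho\,d\mu_2=\mu_1(\bar E)=\mu_1(E).
\end{equation*}
Uniqueness of the Radon--Nikodym derivative then gives $\rho\circ\overline{\,\cdot\,}=\rho$ $\mu_2$-a.e., which is the required identity since $|h|=\sqrt{\rho}$.

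\emph{Main obstacle.} I expect the main care to be needed in Steps 1--2: passing rigorously from intertwining on trigonometric polynomials to the global identity $Uf=hf$, and extracting both $\mu_1=|h|^2\mu_2$ and $\mu_2\ll\mu_1$. Conjugation-invariance of null sets is what makes $c$ and $|h|\circ\overline{\,\cdot\,}$ well defined on these $L^2$ spaces in Step 4.
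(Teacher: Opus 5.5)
Your proposal is correct and follows essentially the same route as the paper. Like the paper, you identify $U$ with multiplication by $h=U1$, use unitarity to get $d\mu_1=|h|^2\,d\mu_2$ and $\mu_1\sim\mu_2$, and use conjugate-invariance of both measures to get $|h(\bar z)|=|h(z)|$ almost everywhere, so that $M_{|h|}$ is unitary and commutes with $M_{\boldsymbol{z}}$ and $c$. You also supply details the paper leaves implicit: density of trigonometric polynomials in place of the appeal to the spectral resolution theorem, and the Radon--Nikodym uniqueness argument for the symmetry of $|h|$.
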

\begin{proof}
Since $U$ intertwines the multiplication operators
$M_{\boldsymbol{z}}$, the spectral resolution theorem for unitary operators
gives
$Uf = hf$, where $h = U1$.
The unitarity of $U$ implies
$d\mu_1 = |h|^2\ d\mu_2$ and $\mu_1 \sim \mu_2$.
Since both measures are conjugate-invariant,
$|h(z)| = |h(\bar{z})|$ almost everywhere.
Thus $W = M_{|h|}$ is unitary and intertwines
both $M_{\boldsymbol{z}}$ and $c$.
\end{proof}

For OSF representations,
we use the realization in Fact~\ref{fact:osffunctionalrealization}
to explicitly obtain the standard $2 \times 2$ block-operator representation of a pair of
orthogonal projections appearing in Halmos' two projections theorem \cite{PRHalmos1969}.
We follow the formulation and notation in
\cite[Theorem~1.1]{ABottcher2010}.

\begin{Proposition}[Explicit realization of Halmos' two projections theorem]
\label{proposition:halmoscmv}
Let $(\pi,V)$ be a nonzero OSF representation,
and let $\mu$ be the spectral measure of
$\pi(g_2g_1)$ with respect to $e_1$.
Define $P,Q$ by
\begin{equation*}
P= \frac{\pi(g_1) + \operatorname{id}_V}{2},
\quad
Q = \frac{\pi(g_2) + \operatorname{id}_V}{2}.
\end{equation*}

Identify $V$ with $L^2(S^1, \mu)$ through
the unitary operator given by
Fact~\ref{fact:osffunctionalrealization},
and define $V^\circ$ by
\begin{equation*}
V^\circ=L^2(S^1\setminus\{-1,1\},\mu).
\end{equation*}
Assume that $V^\circ\ne\{0\}$.

Then $M_0,M_1$ in
\cite[Theorem~1.1]{ABottcher2010} are given by
\begin{align*}
M_0 &= \{f\in V^\circ:cf=f\},
\ 
M_1 = \{f\in V^\circ:cf=-f\},
\end{align*}
and the unitary operator $R:M_1\to M_0$
and the self-adjoint operators
$C,S:M_0\to M_0$ in \cite[Theorem~1.1]{ABottcher2010}
can be chosen as
\begin{equation*}
(Rf)(z)
=-i\ \operatorname{sgn}(\operatorname{Im}(z))\ f(z),
\end{equation*}
\begin{equation*}
(Cf)(z)
=\sqrt{\frac{1+\operatorname{Re}(z)}{2}}\ f(z),
\qquad
(Sf)(z)
=\sqrt{\frac{1-\operatorname{Re}(z)}{2}}\ f(z).
\end{equation*}
\end{Proposition}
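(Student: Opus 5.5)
The plan is to compute everything explicitly in the functional realization of Fact~\ref{fact:osffunctionalrealization} and then match it with the normal form of \cite[Theorem~1.1]{ABottcher2010}. Recall that this theorem decomposes $V$ as
\begin{equation*}
V=M_{00}\oplus M_{01}\oplus M_{10}\oplus M_{11}\oplus M_0\oplus M_1,
\end{equation*}
where $M_{00}=\operatorname{Im}P\cap\operatorname{Im}Q$, $M_{01}=\operatorname{Im}P\cap\ker Q$, $M_{10}=\ker P\cap\operatorname{Im}Q$, $M_{11}=\ker P\cap\ker Q$, and $M_0,M_1$ are the orthogonal complements of $M_{00}\oplus M_{01}$ in $\operatorname{Im}P$ and of $M_{10}\oplus M_{11}$ in $\ker P$. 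On $M_0\oplus M_1$ one has $P=\operatorname{diag}(I,0)$, and $Q$ is the conjugate by $I\oplus R$ of the block matrix with entries $C^2$, $CS$, $CS$, $S^2$. Here $C,S$ are commuting positive contractions on $M_0$ with $C^2+S^2=I$ and $\ker C=\ker S=\{0\}$, and $R:M_1\to M_0$ is unitary.

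Under the identification, $\pi(g_1)=c$. Since $g_2=(g_2g_1)g_1$, we get $\pi(g_2)=M_{\boldsymbol z}c$, that is, $(\pi(g_2)f)(z)=zf(\bar z)$. Hence $\operatorname{Im}P=\{f:cf=f\}$ consists of the even functions and $\ker P$ of the odd functions. Since $\mu$ is conjugate-invariant, $L^2(S^1,\mu)$ splits orthogonally as
\begin{equation*}
L^2(S^1,\mu)=L^2(\{1\},\mu)\oplus L^2(\{-1\},\mu)\oplus V^\circ,
\end{equation*}
and $c$ and $\pi(g_2)$ preserve all three pieces.

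\textbf{Step 1: the intersection spaces.} On the atom at $1$, $c=\pi(g_2)=1$, so this piece lies in $M_{00}$. On the atom at $-1$, $c=1$ and $\pi(g_2)=-1$, so this piece lies in $M_{01}$. For $f\in V^\circ$, all four conditions are excluded:
\begin{itemize}
\item if $f$ is even and $Qf=f$, then $zf(z)=f(z)$ a.e., which forces $f=0$ off $\{1\}$;
\item if $f$ is even and $Qf=0$, then $zf(z)=-f(z)$, which forces $f=0$ off $\{-1\}$;
\item if $f$ is odd and $Qf=f$, then $zf(\bar z)=-zf(z)=f(z)$, which forces $f=0$ off $\{-1\}$;
\item if $f$ is odd and $Qf=0$, then $-zf(z)=-f(z)$, which forces $f=0$ off $\{1\}$.
\end{itemize}
Thus $M_{10}=M_{11}=\{0\}$, and the $V^\circ$-parts of $M_{00}$ and $M_{01}$ vanish. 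It follows that $M_0$ is the even part of $V^\circ$ and $M_1$ is the odd part, as claimed.

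\textbf{Step 2: verify the operators.} The operators $R,C,S$ are multiplication operators, and each maps the right parity space to the right one: $\operatorname{sgn}(\operatorname{Im}z)$ is odd, while $\operatorname{Re}z$ is even. $R$ is unitary because $\operatorname{Im}z\neq0$ $\mu$-a.e.\ on $S^1\setminus\{\pm1\}$. $C$ and $S$ are commuting positive contractions with $C^2+S^2=I$, and their kernels are trivial because $\operatorname{Re}z\neq\pm1$ a.e.\ on $V^\circ$. It remains to check the compressions of $Q$.
\begin{itemize}
\item For even $f_0$: $Qf_0=\tfrac12(f_0+zf_0)$, and $P(zf_0)=\operatorname{Re}(z)f_0$. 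Hence $PQf_0=\tfrac{1+\operatorname{Re}z}{2}f_0=C^2f_0$.
\item For odd $f_1$: $\pi(g_2)f_1=-zf_1$, whose even part is $\tfrac{\bar z-z}{2}f_1=-i\operatorname{Im}(z)f_1$. Hence $PQf_1=-\tfrac{i}{2}\operatorname{Im}(z)f_1$. This equals $CSRf_1$ because $CS=\tfrac{|\operatorname{Im}z|}{2}$ and $|\operatorname{Im}z|\operatorname{sgn}(\operatorname{Im}z)=\operatorname{Im}z$.
\item The remaining entries, $(I-P)Q(I-P)=R^*S^2R$ and the adjoint off-diagonal entry, follow in the same way, or from self-adjointness of $Q$ together with $Q^2=Q$.
\end{itemize}

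\textbf{Main obstacle.} The main difficulty I expect is bookkeeping against the exact convention of \cite[Theorem~1.1]{ABottcher2010}: where $R$ versus $R^*$ sits, and whether the off-diagonal entry is $CSR$ or $R^*CS$. That convention fixes the sign $-i$ in $R$, and I would check it first by the odd-part computation in Step~2. Beyond that, one must ensure that the a.e.\ conditions at $\pm1$ are handled correctly, which is why $V^\circ$ excludes the atoms.
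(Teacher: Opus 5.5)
Your proposal is correct and takes essentially the same route as the paper, whose proof consists only of the remark that the claim follows from elementary calculations: you carry those calculations out, identifying $\pi(g_2)=M_{\boldsymbol z}c$, showing $M_{00}=L^2(\{1\},\mu)$, $M_{01}=L^2(\{-1\},\mu)$ and $M_{10}=M_{11}=\{0\}$, and computing the compressions of $Q$ to $M_0\oplus M_1$. Under the natural convention that the $M_1\to M_0$ entry of $Q$ is $CSR$, your computation $PQf_1=-\tfrac{i}{2}\operatorname{Im}(z)f_1$ gives exactly the stated sign of $R$.
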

\begin{proof}
This follows from elementary calculations.
\end{proof}

\section{Characterizations of multiplicity-free representations}
\label{sec:multiplicityfreerepresentationcharacterization}

The following theorem shows that every multiplicity-free unitary
representation of $D_\infty$, when restricted to the orthogonal
complement of a subspace of dimension at most two,
is unitarily equivalent to an OSF representation.
For two unitary representations $\pi_1$ and $\pi_2$
of a locally compact group, we write $\pi_1 \le \pi_2$
if $\pi_1$ is unitarily equivalent to a subrepresentation of $\pi_2$.

\begin{Theorem}
\label{theorem:characterizationmultiplicityfreeinfinitedihedral}
Let $(\pi,V)$ be a unitary representation of $D_\infty$.
Then the following conditions are equivalent:
\begin{enumerate}
  \item[(i)] $\pi$ is multiplicity-free.
  \item[(ii)] There exists an invariant subspace $U$ of dimension at most two 
  such that $\pi_U \le \iota_{-,+} \oplus \iota_{-,-}$
  and $\pi_{U^{\perp}}$ is unitarily equivalent to an OSF representation,
  where $\pi_U$ and $\pi_{U^{\perp}}$ are subrepresentations corresponding to
  $U$ and $U^{\perp}$, respectively.
  \item[(iii)] There exists an invariant subspace $U$ of dimension at most two 
  such that $\pi_U \le \iota_{-,+} \oplus \iota_{-,-}$
  and $\iota_{-,+} \not\le \pi_{U^{\perp}}$
  and $\iota_{-,-} \not\le \pi_{U^{\perp}}$
  and $\pi_{U^{\perp}}(g_2g_1)$ has a cyclic vector in $U^{\perp}$.
\end{enumerate}
\end{Theorem}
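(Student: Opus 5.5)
I will prove the cycle (i)$\Rightarrow$(ii)$\Rightarrow$(iii)$\Rightarrow$(i). The tools are the type I structure of $D_\infty$ (Fact~\ref{fact:behncke}), the spherical criteria (Fact~\ref{fact:sphericalcriteria}, Theorem~\ref{theorem:sphericalrepresentation}), and the commutant description in Theorem~\ref{theorem:explicitdirectintegralosf}.

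For (i)$\Rightarrow$(ii), assume $\pi$ is multiplicity-free. Let $U$ be the isotypic subspace of $\iota_{-,+}\oplus\iota_{-,-}$, i.e.\ the sum of the $\iota_{-,\pm}$-isotypic components. Multiplicity-freeness forces each of these components to have dimension at most one, so $\dim U\le 2$ and $\pi_U\le\iota_{-,+}\oplus\iota_{-,-}$. The complement $\pi_{U^\perp}$ is multiplicity-free and contains no $\iota_{-,\pm}$. Since $D_\infty$ is type I, its direct integral decomposition is carried by $\widehat{D_\infty}\setminus\{\iota_{-,+},\iota_{-,-}\}$, which is the spherical dual by Fact~\ref{fact:sphericaldual}. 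This set is Borel, being the complement of two points in a standard Borel space. Fact~\ref{fact:sphericalcriteria} therefore gives a $\langle g_1\rangle$-fixed cyclic vector, and Theorem~\ref{theorem:sphericalrepresentation} shows that $\pi_{U^\perp}$ is equivalent to an OSF representation.

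For (ii)$\Rightarrow$(iii), let $\pi_{U^\perp}\simeq\sigma$ with $\sigma$ OSF. Then $\sigma(g_2g_1)$ is a CMV matrix, so it has the cyclic vector $e_1$, or $0$ in the zero case. Transporting it gives a cyclic vector for $\pi_{U^\perp}(g_2g_1)$. Theorem~\ref{theorem:explicitdirectintegralosf} gives $m(\{\iota_{-,+},\iota_{-,-}\})=0$. Since a subrepresentation $\iota_{-,\pm}\le\sigma$ would be an atom of the spectral measure of $\sigma$ at $\iota_{-,\pm}$, this shows $\iota_{-,\pm}\not\le\pi_{U^\perp}$.

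For (iii)$\Rightarrow$(i), which I expect to be the main step, the key claim is that if $\pi(g_2g_1)$ has a cyclic vector, then every irreducible constituent of $\pi$ other than $\iota_{-,\pm}$ occurs with multiplicity at most one. Take $\pi'=\pi_{U^\perp}$. The restriction $\pi'|_{\mathbb Z}$ is cyclic, hence multiplicity-free, so its commutant $\pi'(\mathbb Z)'$ is abelian. Because $\pi'(D_\infty)'\subseteq\pi'(\mathbb Z)'$, the commutant of $\pi'$ is abelian and $\pi'$ is multiplicity-free. Now add $\pi_U$. Each $\iota_{-,\pm}$ appears at most once in $\pi_U$ and not at all in $\pi_{U^\perp}$, so $\pi_U$ and $\pi_{U^\perp}$ are disjoint and $\pi_U$ is multiplicity-free. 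The direct sum of two disjoint multiplicity-free representations is multiplicity-free, since its commutant is the direct sum of the two abelian commutants. This gives (i).

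The delicate points are the disjointness in the last step, which holds because $\pi_{U^\perp}$ contains no $\iota_{-,\pm}$ and disjointness from a one-dimensional irreducible is detected by subrepresentations, and the Borel measurability hypothesis of Fact~\ref{fact:sphericalcriteria} in (i)$\Rightarrow$(ii).
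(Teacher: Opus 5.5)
Your proposal is correct and follows essentially the same route as the paper: (ii)$\Rightarrow$(iii) uses the cyclic vector $e_1$ together with $m(\{\iota_{-,+},\iota_{-,-}\})=0$ from Theorem~\ref{theorem:explicitdirectintegralosf}, and (iii)$\Rightarrow$(i) uses $\pi_{U^\perp}(D_\infty)'\subseteq\pi_{U^\perp}(\mathbb{Z})'$ plus the disjointness of $\pi_U$ and $\pi_{U^\perp}$. The only difference is that for (i)$\Rightarrow$(ii) the paper simply cites Theorem~\ref{theorem:dinftyunitaryrepresentationdecomposition}, whereas you unpack its proof in the multiplicity-free case by splitting off the $\iota_{-,\pm}$-isotypic part and applying Fact~\ref{fact:sphericalcriteria} and Theorem~\ref{theorem:sphericalrepresentation}, which is exactly the argument used there.
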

\begin{proof}
The implication (i) $\implies$ (ii) follows directly from
Theorem~\ref{theorem:dinftyunitaryrepresentationdecomposition}.

The implication (ii) $\implies$ (iii) follows from
Theorem~\ref{theorem:explicitdirectintegralosf}
and the fact that every CMV matrix has a cyclic vector.

The cyclicity of $\pi_{U^\perp}(g_2g_1)$ implies that
$\pi_{U^\perp}$ is multiplicity-free.
Furthermore, $\pi_{U}$ is multiplicity-free.
Since $\pi_{U^{\perp}}$ and $\pi_U$ are disjoint unitary representations,
$\pi$ is multiplicity-free.
Hence we have (iii) $\implies$ (i).
\end{proof}

The following proposition provides several equivalent characterizations
of OSF representations.

\begin{Proposition}
\label{position:characterizationosf}
Let $(\pi,V)$ be a unitary representation of $D_\infty$.
Then the following conditions are equivalent:
\begin{enumerate}
  \item[(i)] $\pi$ is unitarily equivalent to an OSF representation.
  \item[(ii)] $\iota_{-,+} \not\le \pi$, $\iota_{-,-} \not\le \pi$,
  and $\pi$ is multiplicity-free.
  \item[(iii)] $\iota_{-,+} \not\le \pi$, $\iota_{-,-} \not\le \pi$,
  and there exists a vector $v$ of $V$ such that
  $\overline{\operatorname{span}(\pi(\mathbb{Z})v)}=V$.
  \item[(iv)] $\pi$ is a spherical representation
  of the Gelfand pair $(D_\infty,\langle g_1 \rangle)$.
\end{enumerate}
\end{Proposition}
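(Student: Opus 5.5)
The plan is to prove the cycle (i) $\Rightarrow$ (iii) $\Rightarrow$ (ii) $\Rightarrow$ (i), and to get (i) $\Leftrightarrow$ (iv) directly from Theorem~\ref{theorem:sphericalrepresentation}.

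\emph{(i) $\Rightarrow$ (iii).} We may assume $\pi$ is an OSF representation and $V\neq\{0\}$; otherwise everything is trivial. Then $\pi(g_2g_1)$ is a real CMV matrix, so $e_1$ is cyclic for it. This gives $\overline{\operatorname{span}(\pi(\mathbb{Z})v)}=V$ with $v=e_1$. By Theorem~\ref{theorem:explicitdirectintegralosf}, $\pi\simeq\int^\oplus \xi\,dm(\xi)$ with $m(\{\iota_{-,+},\iota_{-,-}\})=0$. A subrepresentation $\iota_{-,\pm}\le\pi$ would force $m(\{\iota_{-,\pm}\})>0$, because the irreducible atoms of a direct integral over the type I dual are exactly the atoms of $m$. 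Hence $\iota_{-,\pm}\not\le\pi$.

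\emph{(iii) $\Rightarrow$ (ii).} Since $\pi(\mathbb{Z})$ has a cyclic vector, its commutant $\pi(\mathbb{Z})'$ is abelian. Now $\pi(D_\infty)'\subseteq\pi(\mathbb{Z})'$, so $\pi(D_\infty)'$ is abelian as well, and $\pi$ is multiplicity-free. This is the same argument used in the proof of (iii) $\Rightarrow$ (i) of Theorem~\ref{theorem:characterizationmultiplicityfreeinfinitedihedral}.

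\emph{(ii) $\Rightarrow$ (i).} Apply Theorem~\ref{theorem:characterizationmultiplicityfreeinfinitedihedral} (i) $\Rightarrow$ (ii). This gives an invariant $U$ with $\pi_U\le\iota_{-,+}\oplus\iota_{-,-}$ and $\pi_{U^\perp}$ OSF. The hypotheses $\iota_{-,\pm}\not\le\pi$ force $\pi_U=0$, so $U=\{0\}$ and $\pi=\pi_{U^\perp}$ is unitarily equivalent to an OSF representation. Alternatively, one can use Theorem~\ref{theorem:dinftyunitaryrepresentationdecomposition} directly. Multiplicity-freeness kills all $\rho_i$ with $i\ge2$, and the exclusion of $\iota_{-,\pm}$ kills $\iota_\pm$. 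Here one uses that $\iota_{-,\pm}\not\le\rho_1$, which holds by the previous step. So $\pi\simeq\rho_1$.

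\emph{(i) $\Leftrightarrow$ (iv).} This is exactly Theorem~\ref{theorem:sphericalrepresentation}.

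The only delicate point is the claim in (i) $\Rightarrow$ (iii) that an irreducible subrepresentation of a direct integral must sit on an atom of the measure. I would justify it via Fact~\ref{fact:spectralmeasurespherical}. An intertwiner from $\iota_{-,\pm}$ into $\int^\oplus\xi\,dm$ has range in the subspace of vectors supported at the point $\iota_{-,\pm}$, which is zero since $m(\{\iota_{-,\pm}\})=0$. Concretely, the projection onto that range lies in $\pi(D_\infty)'=\{M_f\}$, so it equals $M_{1_E}$ for some Borel $E$. On $E$ the fibres must be the one-dimensional representation $\iota_{-,\pm}$, which forces $E\subseteq\{\iota_{-,\pm}\}$ up to null sets. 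Hence $M_{1_E}=0$.
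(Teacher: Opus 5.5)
Your proof is correct and is essentially the paper's argument. The paper obtains (i)$\Leftrightarrow$(ii)$\Leftrightarrow$(iii) from Theorem~\ref{theorem:characterizationmultiplicityfreeinfinitedihedral}, where $\iota_{-,\pm}\not\le\pi$ forces $U=\{0\}$, and (i)$\Leftrightarrow$(iv) from Theorem~\ref{theorem:sphericalrepresentation}. Your cycle just unpacks the ingredients of the former's proof: cyclicity of $e_1$, $m(\{\iota_{-,+},\iota_{-,-}\})=0$ from Theorem~\ref{theorem:explicitdirectintegralosf}, and commutativity of $\pi(\mathbb{Z})'$.

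Your atom argument for $\iota_{-,\pm}\not\le\pi$ is sound, but there is a quicker route in $L^2(S^1,\mu)$. A vector $w$ spanning a copy of $\iota_{-,\pm}$ must be supported at $z=\mp1$, hence $cw=w$, which contradicts $cw=-w$.
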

\begin{proof}
From Theorem~\ref{theorem:characterizationmultiplicityfreeinfinitedihedral},
conditions (i), (ii), and (iii) are equivalent.
The equivalence between (i) and (iv) follows from Theorem~\ref{theorem:sphericalrepresentation}.
\end{proof}

The following two propositions characterize when the restriction of a
unitary representation $\pi$ of $D_\infty$ to $\mathbb{Z}$ is multiplicity-free.
Proposition~\ref{proposition:dinftyrestrictionmutiplicityfreeoperator}
characterizes this condition in terms of the commutant $\pi(D_\infty)'$,
while Proposition~\ref{proposition:dinftyrestrictionmutiplicityfreeverblunsky}
characterizes it in terms of Verblunsky coefficients.

\begin{Proposition}
\label{proposition:dinftyrestrictionmutiplicityfreeoperator}
Let $(\pi,V)$ be a unitary representation of $D_\infty$.
Then $\pi|\mathbb{Z}$ is multiplicity-free
if and only if
$\pi(D_\infty)'=W^*(\pi(g_2g_1)+\pi(g_1g_2))$.
Furthermore, if $\pi|\mathbb{Z}$ is multiplicity-free,
then there exists a unitary representation $\rho$ on $V$
such that $\rho$ is unitarily equivalent to an OSF representation,
$\rho|\mathbb{Z}=\pi|\mathbb{Z}$, and
$\pi(D_\infty)'=\rho(D_\infty)'$.
\end{Proposition}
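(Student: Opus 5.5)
Write $C=\pi(g_2g_1)$, $T=C+C^*$ and $J=\pi(g_1)$, so that $JCJ=C^*$ and $J$ commutes with $T$. The commutant $\pi(D_\infty)'$ is exactly $\{C,J\}'$. In particular it always contains $W^*(T)$, since $T$ commutes with both $C$ and $J$, and it is contained in $\{C\}'=\pi(\mathbb{Z})'$.

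For the forward direction, assume $\pi|\mathbb{Z}$ is multiplicity-free. Then $\{C\}'$ is abelian and equals $W^*(C)$. By Fact~\ref{fact:zunitaryrepresentationdecomposition} it has a cyclic vector, so we may realize $V=L^2(S^1,\nu)$ with $C=M_{\boldsymbol z}$. Since $J$ commutes with $T$, it lies in $\{T\}'$, and the plan is to analyze it there. I would write $X\in\pi(D_\infty)'$ as $X=M_f$ with $f\in L^\infty(\nu)$; the condition $XJ=JX$ then becomes a constraint on $f$. First I would show that $JM_fJ=M_{f\circ\bar{\cdot}}$ and that $\nu$ is conjugate-invariant up to equivalence. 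This follows because $JM_{\boldsymbol z}J=M_{\bar{\boldsymbol z}}$, which shows that $\operatorname{Ad}J$ is an automorphism of $W^*(C)$ implementing conjugation, so $\nu\sim\bar\nu$. Then $M_f$ commutes with $J$ iff $f(z)=f(\bar z)$ $\nu$-a.e. Such $f$ are exactly the Borel functions of $\operatorname{Re} z$, so $X\in W^*(T)$ and $\pi(D_\infty)'=W^*(T)$.

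For the converse, assume $\pi(D_\infty)'=W^*(T)$. Then $J\in$? No, not necessarily, so I would argue differently. The commutant is abelian, so $\pi$ is multiplicity-free, and we decompose $\pi$ as in Theorem~\ref{theorem:characterizationmultiplicityfreeinfinitedihedral}(ii): $\pi=\pi_U\oplus\pi_{U^\perp}$ with $\pi_{U^\perp}$ OSF. On $U$, the projection onto $\iota_{-,\pm}$ summands is in the commutant. If both $\iota_{-,+}$ and $\iota_{-,-}$ occur, the two projections must be distinguished by $T$. But $T=2\operatorname{Re}(C)$ acts as $\pm 2$ on $\iota_{-,\mp}$ (since $C=\iota(g_2)\iota(g_1)=\mp1$), so $T$ does distinguish them. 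Next, the projection onto $U$ must lie in $W^*(T)$. On $U^\perp$, the OSF part has $T$-spectral measure $\eta_*\mu$, and $\iota_{-,\pm}$ gives the atom at $\mp1$. The projection onto $U$ lies in $W^*(T)$ iff the OSF part has no atom of $T$ at the relevant point, i.e.\ $\mu(\{\mp1\})=0$ whenever $\iota_{-,\pm}\le\pi$.

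Given that, I would construct $\rho$ by replacing $\pi(g_1)$ with $-\pi(g_1)$ and $\pi(g_2)$ by $-\pi(g_2)$ on $U$. This leaves $C$ unchanged, and each $\iota_{-,\pm}$ becomes $\iota_{+,\mp}$, which is spherical. Then $\rho|\mathbb{Z}=\pi|\mathbb{Z}$, $\rho$ has no exceptional summands, and $\rho$ is multiplicity-free because the new atoms fall where $\mu$ has none. By Proposition~\ref{position:characterizationosf}, $\rho$ is OSF. Then $\rho|\mathbb{Z}$ is cyclic, which gives the multiplicity-freeness of $\pi|\mathbb{Z}$ and proves the converse. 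The equality $\rho(D_\infty)'=W^*(T)=\pi(D_\infty)'$ follows from Theorem~\ref{theorem:explicitdirectintegralosf}. For the "furthermore" clause, starting from $\pi|\mathbb{Z}$ multiplicity-free, the same $\rho$ works. Indeed, the cyclicity of $C$ forces the atom condition automatically, since an $\iota_{-,\pm}$ summand coinciding with a $C$-eigenvector in the OSF part would give multiplicity two for $C$.

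The main obstacle is the converse. One must verify carefully that the projection onto the exceptional part belongs to $W^*(T)$ exactly when the atom condition holds, and that this condition is what makes $\rho$ multiplicity-free. I would handle this by computing the spectral projection $1_{\{\mp2\}}(T)$ explicitly on both summands.
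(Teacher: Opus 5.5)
Your proof is correct, but in the forward direction it takes a genuinely different route from the paper.

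\textbf{Forward direction.} The paper proves that multiplicity-freeness of $\pi|\mathbb{Z}$ implies $\pi(D_\infty)'=W^*(T)$ in four steps:
\begin{itemize}
\item it decomposes $\pi\simeq a_+\iota_{-,+}\oplus a_-\iota_{-,-}\oplus\pi_0$ via Theorem~\ref{theorem:characterizationmultiplicityfreeinfinitedihedral};
\item it replaces $\iota_{-,\pm}$ by $\iota_{+,\mp}$, which is exactly your sign flip on $U$;
\item it checks that the commutant does not change and that the result is OSF by Proposition~\ref{position:characterizationosf};
\item it imports the commutant identity from Theorem~\ref{theorem:explicitdirectintegralosf}, which rests on the spherical direct-integral machinery of Fact~\ref{fact:spectralmeasurespherical}.
\end{itemize}
You instead compute the commutant directly in the spectral model $L^2(S^1,\nu)$ of $C$. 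Conjugation by $\pi(g_1)$ acts on $\{C\}'=L^\infty(\nu)$ by $f\mapsto f(\bar{\cdot})$. This is most cleanly justified by $\pi(g_1)E_C(B)\pi(g_1)=E_C(\bar{B})$, which also gives $\nu\sim\bar{\nu}$. Hence the commutant consists of the conjugation-invariant functions, i.e.\ bounded Borel functions of $\operatorname{Re}z$, i.e.\ $W^*(T)$. This is more elementary and recovers the commutant part of Theorem~\ref{theorem:explicitdirectintegralosf} as a special case. The paper's route has the advantage of producing the OSF model $\rho$ at the same time.

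\textbf{Converse.} Your argument has the same core as the paper's. The projection onto the exceptional part would have to lie in $W^*(T)$. That is impossible when an OSF atom of $C$ sits at the same point of $\{\pm1\}$, since $T$ is scalar on the resulting two-dimensional eigenspace. Your detour through $\rho$ and Proposition~\ref{position:characterizationosf} is valid but unnecessary: under the atom condition, $\pi_U|\mathbb{Z}$ and $\pi_{U^\perp}|\mathbb{Z}$ are already disjoint and multiplicity-free. Likewise, for the final clause, your own forward direction applied to $\rho$ (which satisfies $\rho|\mathbb{Z}=\pi|\mathbb{Z}$) already gives $\rho(D_\infty)'=W^*(T)=\pi(D_\infty)'$, without Theorem~\ref{theorem:explicitdirectintegralosf}.

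\textbf{Minor slips.} On $\iota_{-,\mp}$ one has $C=\pm1$, not $\mp1$. Also, $\eta_*\mu$ is the spectral measure of $T/2$, not of $T$.
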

\begin{proof}
First, assume that $\pi|\mathbb{Z}$ is multiplicity-free.
Since $\pi(D_\infty)' \subset \pi(\mathbb{Z})'$
and $\pi(\mathbb{Z})'$ is commutative,
$\pi$ is multiplicity-free.

Furthermore, from Theorem~\ref{theorem:characterizationmultiplicityfreeinfinitedihedral},
there exist a unitary representation $(\pi',H)$ of $D_\infty$ and
$a_{\pm} \in \{0,1\}$ and an OSF representation $\pi_0$ such that
\begin{equation*}
\pi \simeq \pi' = a_{+}\iota_{-,+} \oplus a_{-} \iota_{-,-} \oplus \pi_0.
\end{equation*}
Since $\pi|\mathbb{Z}$ is multiplicity-free,
$\iota_{-,\pm} \not\le \pi_0$ if $a_{\pm}>0$.

Define $(\pi_1,H)$ by
$\pi_1=a_+\iota_{+,-}\oplus a_-\iota_{+,+}\oplus\pi_0$.
Since $\pi'|\mathbb{Z}$ is multiplicity-free,
the nonzero one-dimensional summands of
$\pi_1$ are disjoint from $\pi_0$.
Consequently,
$\pi_1|\mathbb{Z}=\pi'|\mathbb{Z}$ and
$\pi_1(D_\infty)'=\pi'(D_\infty)'$.

Since $\pi_1|\mathbb{Z}=\pi'|\mathbb{Z}$ and $\pi'|\mathbb{Z}$
is multiplicity-free, $\pi_1$ is multiplicity-free.
Furthermore, we have $\iota_{-,\pm} \not\le \pi_0$ from
Theorem~\ref{theorem:explicitdirectintegralosf}.
From Proposition~\ref{position:characterizationosf},
$\pi_1$ is unitarily equivalent to an OSF representation.
Since $\pi_1$ is unitarily equivalent to an OSF representation,
Theorem~\ref{theorem:explicitdirectintegralosf} gives
\begin{equation*}
\pi_1(D_\infty)'
=W^*(\pi_1(g_2g_1)+\pi_1(g_1g_2)).
\end{equation*}
Since $\pi_1|\mathbb{Z}=\pi'|\mathbb{Z}$
and $\pi_1(D_\infty)'=\pi'(D_\infty)'$,
the same equality holds for $\pi'$.
By unitary equivalence, it also holds for $\pi$.

Let $F:V \to H$ be a unitary intertwining operator
from $\pi$ to $\pi'$.
Define a unitary representation $\rho$ on $V$ by
$\rho(g)=F^{-1}\pi_1(g)F \quad(g\in D_\infty)$.
Then $\rho$ is unitarily equivalent to an OSF
representation, and
\begin{equation*}
\rho|\mathbb{Z}=\pi|\mathbb{Z},
\qquad
\rho(D_\infty)'=\pi(D_\infty)'.
\end{equation*}

Conversely, let $C=\pi(g_2g_1)$ and $T=C+C^*$,
and assume that $\pi(D_\infty)'=W^*(T)$.
Then $\pi$ is multiplicity-free.
If $\pi|\mathbb{Z}$ were not multiplicity-free,
Theorem~\ref{theorem:characterizationmultiplicityfreeinfinitedihedral}
would imply that $C$ has an eigenvalue
$\lambda\in\{-1,1\}$ shared by an exceptional
one-dimensional summand and an OSF summand.
The projection onto the exceptional one-dimensional
summand
would then be in $\pi(D_\infty)'$ but not
in $W^*(T)$, since $T = 2\lambda \operatorname{id}_W$ on the
corresponding two-dimensional eigenspace $W$.
This is a contradiction.
\end{proof}

\begin{Proposition}
\label{proposition:dinftyrestrictionmutiplicityfreeverblunsky}
Let $(\pi,V)$ be a multiplicity-free unitary
representation of $D_\infty$,
and let $U$ be the invariant subspace given by
Theorem~\ref{theorem:characterizationmultiplicityfreeinfinitedihedral}.
Assume that $U^\perp\ne\{0\}$,
and let $\mathcal{A}$ be a sequence of real Verblunsky
coefficients of length $\dim U^\perp$
such that the corresponding real CMV matrix
is unitarily equivalent to $\pi_{U^\perp}(g_2g_1)$.
Define $N$ by $N=\dim U^{\perp}$.

When $N < \infty$, let $\gamma$ be the last component
of $\mathcal{A}$.
When $N = \infty$, define $\theta_j \in (0,\pi)$ by
$\mathcal{A} = (\cos\theta_j)_{j=0}^\infty$, and define $(a_m)_{m=1}^\infty$
and $(b_m)_{m=1}^\infty$ by
\begin{align*}
a_m&=
\frac{1}{\cos(\theta_{2m-2}/2)}
\prod_{j=1}^{m-1}
\tan(\theta_{2j-2}/2)\tan(\theta_{2j-1}/2),\\
b_m&=
\frac{(-1)^{m+1}}{\sin(\theta_{2m-2}/2)}
\prod_{j=1}^{m-1}
\frac{\tan(\theta_{2j-1}/2)}
{\tan(\theta_{2j-2}/2)}
\qquad(m\in\mathbb N).
\end{align*}

Then $\pi|\mathbb Z$ is not multiplicity-free
if and only if at least one of the following holds:
\begin{enumerate}
\item[(i)] $\iota_{-,-}\le\pi_U$, and either
$N<\infty$ with $\gamma=1$, or
$N=\infty$ with $(a_m)_{m=1}^\infty\in\ell^2$.
\item[(ii)] $\iota_{-,+}\le\pi_U$, and either
$N<\infty$ with $\gamma=(-1)^N$, or
$N=\infty$ with $(b_m)_{m=1}^\infty\in\ell^2$.
\end{enumerate}
\end{Proposition}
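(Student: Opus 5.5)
We reduce the failure of multiplicity-freeness for $\pi|\mathbb{Z}$ to an eigenvalue condition for the real CMV matrix $C=\pi_{U^\perp}(g_2g_1)$. By Theorem~\ref{theorem:characterizationmultiplicityfreeinfinitedihedral}, $\pi=\pi_U\oplus\pi_{U^\perp}$ with $\pi_U\le\iota_{-,+}\oplus\iota_{-,-}$. Since $C$ has the cyclic vector $e_1$, the restriction $\pi_{U^\perp}|\mathbb{Z}$ is multiplicity-free. The summands of $\pi_U$ are mutually disjoint as representations of $\mathbb{Z}$: $\iota_{-,-}(g_2g_1)=1$ and $\iota_{-,+}(g_2g_1)=-1$. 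Hence $\pi|\mathbb{Z}$ fails to be multiplicity-free if and only if one of the following holds:
\begin{enumerate}
\item[(a)] $\iota_{-,-}\le\pi_U$ and $1$ is an eigenvalue of $C$;
\item[(b)] $\iota_{-,+}\le\pi_U$ and $-1$ is an eigenvalue of $C$.
\end{enumerate}
This reduction is exactly the argument used in the converse part of Proposition~\ref{proposition:dinftyrestrictionmutiplicityfreeoperator}. It remains to characterize when $\pm1$ is an eigenvalue of $C=LM$ in terms of $\mathcal{A}$.

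For $N<\infty$, we use that $\pm1$ is an eigenvalue of the unitary $C$ exactly when $\mu(\{\pm1\})>0$. For a finite CMV matrix, $\mu$ is supported on the zeros of the paraorthogonal polynomial $\Phi_{N-1}(z)z-\bar\gamma\Phi_{N-1}^*(z)$. Equivalently, we can compute directly with the natural factorization, where the last block of $L$ or $M$ is the $1\times1$ entry $\bar\gamma$ (with $|\gamma|=1$). Evaluating $\Phi_N(z)=z\Phi_{N-1}(z)-\bar\gamma\Phi^*_{N-1}(z)$ at $z=1$ and using the real recursion gives
\[
\Phi_N(1)=\prod_{j}(1-\alpha_j)\,(\text{last factor }1-\gamma),
\]
since $\Phi_k^*(1)=\Phi_k(1)$ for real coefficients. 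This vanishes iff $\gamma=1$. At $z=-1$ we have $\Phi_k^*(-1)=(-1)^k\Phi_k(-1)$, and the same computation gives vanishing iff $\gamma=(-1)^N$. This yields the finite cases of (i) and (ii).

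For $N=\infty$, the eigenvalue equation $Cv=\lambda v$ with $\lambda=\pm1$ is equivalent to $Mv=\lambda Lv$, since $L^2=\mathrm{id}$. Both $L$ and $M$ are direct sums of the $2\times2$ reflections $\Theta_j=\begin{pmatrix}\cos\theta_j&\sin\theta_j\\ \sin\theta_j&-\cos\theta_j\end{pmatrix}$ acting on interlaced pairs of coordinates (with a $1\times1$ entry $1$ at the start of $M$). We would therefore solve $Mv=\lambda Lv$ as a two-step recursion on the coordinates $v_1,v_2,\dots$. Equivalently, $v$ is a common vector with prescribed eigenvalues: the $\pm1$-eigenspaces of $\Theta_j$ are spanned by $(\cos(\theta_j/2),\sin(\theta_j/2))$ and $(-\sin(\theta_j/2),\cos(\theta_j/2))$. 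Matching the block coordinates alternately fixes each successive pair up to a scalar, which propagates by a factor of $\tan$ or $\cot$ of the half-angles. The formal solution is unique up to scale, and its coordinates, grouped in pairs, should have norms comparable to $a_m$ for $\lambda=1$ and to $b_m$ for $\lambda=-1$. The initial factors $1/\cos(\theta_{2m-2}/2)$ and $1/\sin(\theta_{2m-2}/2)$ come from normalizing within a block. Thus $\lambda$ is an eigenvalue iff this formal solution is in $\ell^2$, i.e. iff $(a_m)\in\ell^2$, respectively $(b_m)\in\ell^2$. This gives the infinite cases of (i) and (ii).

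The main obstacle is the $N=\infty$ bookkeeping. One must fix the convention for which blocks of $L$ and $M$ carry the even- versus odd-indexed angles, and then verify that the norm of each pair of coordinates of the formal eigenvector is bounded above and below by constant multiples of $a_m$, respectively $b_m$. If exact equality fails, we will instead show that the ratio is bounded above and below, which suffices for the $\ell^2$ criterion. We would carry out this computation first, checking small cases against the explicit $2\times2$ blocks.
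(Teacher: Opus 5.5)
Your proposal is correct and follows the paper's proof. The paper makes the same reduction: since $\pi_{U^\perp}|\mathbb{Z}$ is multiplicity-free, $\pi|\mathbb{Z}$ fails to be multiplicity-free iff $\iota_{-,-}\le\pi_U$ and $1$ is an eigenvalue of $\pi_{U^\perp}(g_2g_1)$, or $\iota_{-,+}\le\pi_U$ and $-1$ is. It then calls the finite case ``a simple calculation'' and cites Simon for $N=\infty$; the standard OPUC fact there is $\mu(\{\pm1\})=(\sum_n|\varphi_n(\pm1)|^2)^{-1}$.

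What you add is a proof of the two eigenvalue criteria. Your evaluation of $\Phi_N(\pm1)$ by the real Szeg\H{o} recursion handles $N<\infty$. Your blockwise treatment of $Mv=\lambda Lv$ replaces the citation with an elementary argument that uses only the natural factorization.

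That argument also closes more cleanly than you fear.
\begin{itemize}
\item In row $k$ of $Mv=\lambda Lv$ the coefficient of $v_{k+1}$ is $\sin\theta_k\neq0$, so the formal solution is determined by $v_0$. The vector with $Mv=v$ and $Lv=\lambda v$, built from the half-angle eigenvectors of the blocks, solves this equation. This justifies your ``equivalently''.
\item Index coordinates from $0$ and set $v_0=1$. For $\lambda=1$ you get $v_k=\prod_{j<k}\tan(\theta_j/2)$. For $\lambda=-1$ the successive ratios alternate between $-\cot(\theta_{2i}/2)$ and $\tan(\theta_{2i+1}/2)$.
\item Group coordinates by the blocks of $L$. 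Then $\|(v_{2m-2},v_{2m-1})\|=|a_m|$ exactly for $\lambda=1$, and $=|b_m|$ for $\lambda=-1$. Hence $\|v\|^2=\sum_m a_m^2$, respectively $\sum_m b_m^2$, and your comparability fallback is not needed.
\end{itemize}
Incidentally, $|v_k|=|\varphi_k(\pm1)|$, which is why your route and the cited OPUC route produce the same sequences.
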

\begin{proof}
Since $\pi_{U^\perp}|\mathbb Z$ is multiplicity-free,
$\pi|\mathbb Z$ is not multiplicity-free
if and only if $\pi_U$ contains $\iota_{-,-}$
and $1$ is an eigenvalue of
$\pi_{U^\perp}(g_2g_1)$,
or $\pi_U$ contains $\iota_{-,+}$
and $-1$ is an eigenvalue of
$\pi_{U^\perp}(g_2g_1)$.

For $N < \infty$, a simple calculation shows that
$1$ (respectively, $-1$) is an eigenvalue of
$\pi_{U^\perp}(g_2g_1)$ if and only if
$\gamma=1$ (respectively, $\gamma=(-1)^N$).
For $N=\infty$, $1$ (respectively, $-1$) is an eigenvalue
if and only if $(a_m)_{m=1}^\infty \in \ell^2$
(respectively, $(b_m)_{m=1}^\infty \in \ell^2$);
see, e.g., \cite{BSimon2005p1}.
\end{proof}

The following proposition provides, in terms of real CMV matrices and spectral measures, 
necessary and sufficient conditions under which a multiplicity-free unitary representation 
of $\mathbb{Z}$ can be extended to a multiplicity-free unitary representation of $D_\infty$.

\begin{Proposition}
\label{proposition:dinftyextentionmutiplicityfree}
Let $(\pi,V)$ be a multiplicity-free unitary representation of $\mathbb{Z}$.
Then the following conditions are equivalent:
\begin{enumerate}
\item[(i)] There exists a unitary representation $\tilde{\pi}$ of $D_\infty$ such that $\tilde{\pi}|\mathbb{Z}\simeq\pi$.
\item[(ii)] $\pi(1)$ is unitarily equivalent to a real CMV matrix.
\item[(ii)'] There exists a conjugate-invariant positive regular Borel measure $\mu$ on $S^1$ such that
$\mu$ and a spectral measure of $\pi(1)$ with respect to a cyclic vector
are mutually absolutely continuous.
\item[(ii)''] $\pi(1)$ is unitarily equivalent to the multiplication operator $M_{\boldsymbol{z}}$ on $L^2(S^1,\mu)$,
where $\mu$ is a conjugate-invariant positive regular Borel measure on $S^1$.
\end{enumerate}
Note that if condition (i) is satisfied then there exists an OSF representation $\tilde{\pi}$
such that $\tilde{\pi}|\mathbb{Z} \simeq \pi$.
\end{Proposition}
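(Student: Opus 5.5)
I would prove the cycle (i) $\Rightarrow$ (ii)'' $\Rightarrow$ (ii)' $\Rightarrow$ (ii) $\Rightarrow$ (i), and read off the final remark along the way.

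\textbf{(ii) $\Rightarrow$ (i).} This is immediate. If $\pi(1)\simeq C$ for a real CMV matrix $C=LM$ with its natural factorization, the associated OSF representation $\tilde\pi$, given by $\tilde\pi(g_1)=M$ and $\tilde\pi(g_2)=L$, satisfies $\tilde\pi(g_2g_1)=C$. Hence $\tilde\pi|\mathbb{Z}\simeq\pi$, and the extension obtained is already OSF.

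\textbf{(i) $\Rightarrow$ (ii)''.} Let $\tilde\pi$ extend $\pi$. Since $\tilde\pi(D_\infty)'\subset\tilde\pi(\mathbb{Z})'$ and the latter is commutative, $\tilde\pi$ is multiplicity-free. Theorem~\ref{theorem:characterizationmultiplicityfreeinfinitedihedral} then gives a decomposition $\tilde\pi\simeq\pi_U\oplus\pi_0$ with $\pi_0$ OSF and $\pi_U\le\iota_{-,+}\oplus\iota_{-,-}$.

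The key trick is the one from the proof of Proposition~\ref{proposition:dinftyrestrictionmutiplicityfreeoperator}. Replace each summand $\iota_{-,\pm}$ in $\pi_U$ by $\iota_{+,\mp}$; this leaves the action of $g_2g_1$ unchanged, since it still acts by $\mp1$. Call the resulting representation $\pi_1$, so $\pi_1|\mathbb{Z}=\tilde\pi|\mathbb{Z}\simeq\pi$.

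Because $\pi$ is multiplicity-free, $\pi_1$ is multiplicity-free. By Theorem~\ref{theorem:explicitdirectintegralosf}, $\iota_{-,\pm}\not\le\pi_0$, so $\pi_1$ contains no $\iota_{-,\pm}$. Proposition~\ref{position:characterizationosf} then makes $\pi_1$ unitarily equivalent to an OSF representation; this also proves the final remark.

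Now apply Fact~\ref{fact:osffunctionalrealization} to $\pi_1$. It gives $\pi(1)\simeq M_{\boldsymbol z}$ on $L^2(S^1,\mu)$, where $\mu$ is the spectral measure of a real CMV matrix with respect to $e_1$. Such a $\mu$ is conjugate-invariant, as recalled in the proof of Theorem~\ref{theorem:dinftyunitaryrepresentationdecomposition}. The case $V=\{0\}$ is trivial, with $\mu=0$.

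\textbf{(ii)'' $\Leftrightarrow$ (ii)'.} Both directions are standard spectral theory.
\begin{itemize}
\item[(ii)'' $\Rightarrow$ (ii)'.] In $L^2(S^1,\mu)$ the constant function $1$ is cyclic for $M_{\boldsymbol z}$ and has spectral measure $\mu$. Spectral measures of cyclic vectors of the same operator are mutually absolutely continuous, so a spectral measure of $\pi(1)$ with respect to any cyclic vector is equivalent to $\mu$.
\item[(ii)' $\Rightarrow$ (ii)''.] If $\nu\sim\mu$ is the spectral measure of a cyclic vector, then $\pi(1)\simeq M_{\boldsymbol z}$ on $L^2(S^1,\nu)$. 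Multiplication by $\sqrt{d\nu/d\mu}$ is a unitary from $L^2(S^1,\nu)$ onto $L^2(S^1,\mu)$ that intertwines $M_{\boldsymbol z}$, which gives the claim.
\end{itemize}

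\textbf{(ii)'' $\Rightarrow$ (ii).} This is the step I expect to be the main obstacle. The plan is to normalize $\mu$ to a probability measure, which is still conjugate-invariant, provided it is nonzero. Conjugate-invariance means the Verblunsky coefficients of $\mu$ are real; this is the standard OPUC fact that the moments $\int z^n\,d\mu$ are then real, so the Szeg\H{o} recursion has real coefficients.

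The CMV matrix built from these coefficients is therefore a real CMV matrix. By the CMV theorem \cite{MJCantero2003,BSimon2005p1}, it is unitarily equivalent to $M_{\boldsymbol z}$ on $L^2(S^1,\mu)$; this holds in both the finite-support and infinite-support cases, the finite case being a CMV matrix of size $|\operatorname{supp}\mu|$.

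The delicate point is to cite precisely the OPUC facts needed: realness of the Verblunsky coefficients, and the correct handling of finitely supported $\mu$, where the last coefficient has modulus $1$ and is real, hence equal to $\pm1$. Both should follow from \cite{BSimon2005p1}.
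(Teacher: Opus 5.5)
Your proof is correct and follows essentially the same route as the paper: (ii)$\Rightarrow$(i) by the natural factorization, (i)$\Rightarrow$(ii) by the $\iota_{-,\pm}\mapsto\iota_{+,\mp}$ replacement argument (which the paper simply cites as Proposition~\ref{proposition:dinftyrestrictionmutiplicityfreeoperator}), and (ii)$\Leftrightarrow$(ii)'$\Leftrightarrow$(ii)'' by the spectral theorem together with Verblunsky's theorem and the reality of Verblunsky coefficients for conjugate-invariant measures. The only cosmetic difference is that you pass from $\pi_1$ through the functional realization to (ii)'' and then back to (ii). This detour is unnecessary, since (ii) is already immediate once $\pi_1$ is unitarily equivalent to an OSF representation.
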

\begin{proof}
From Proposition~\ref{proposition:dinftyrestrictionmutiplicityfreeoperator},
we have that (i) $\implies$ (ii).
From the spectral resolution theorem for unitary operators,
standard results on CMV matrices  (see, e.g., \cite{BSimon2005p1})
and Verblunsky's theorem \cite{SVerblunsky1935},
(ii), (ii)', and (ii)'' are equivalent.

Assume that $\pi(1)$ is unitarily equivalent to
a real CMV matrix $C$.
The natural factorization of $C$
yields an OSF representation $\tilde{\pi}$
of $D_\infty$ such that $\tilde{\pi}(g_2g_1) = C$.
Hence $\tilde{\pi}|{\mathbb{Z}} \simeq \pi$,
proving (ii) $\implies$ (i).
\end{proof}

\section*{Acknowledgements}
The author is grateful to Professor Takayuki Okuda for valuable suggestions and comments
on the direction and scope of this research.
The author would also like to thank Koichi Tojo, Shunsuke Miyauchi,
and Sota Takahashi for taking the time to discuss the results
and for their helpful comments and suggestions.

\section*{Declaration of generative AI and AI-assisted technologies
in the manuscript preparation process}

During the preparation of this manuscript, the author used
OpenAI's ChatGPT to assist with English-language editing,
improving the clarity and organization of the manuscript,
and discussing mathematical arguments and proofs.
The author carefully reviewed and verified the AI-assisted
suggestions, edited the manuscript as needed, and takes
full responsibility for its content.

\appendix

\section{Proof of Fact~\ref{fact:sphericalcriteria}}
\label{sec:proofofsphericalcriteria}

We give a proof of Fact~\ref{fact:sphericalcriteria}.

\begin{proof}
For each $\omega \in \widehat{G}^{K\text{-}\mathrm{sph}}$,
let $P_\omega$ be the orthogonal projection
onto the space of $K$-fixed vectors, obtained by averaging
$\pi_\omega$ over $K$.
Applying this family of nonzero projections
to a fundamental sequence, and normalizing the first nonzero
image at each point, gives a measurable field of
$K$-fixed unit vectors $v_\omega$ $(\omega \in \widehat{G}^{K\text{-}\mathrm{sph}})$.
Since $m$ is finite, $(v_\omega)_{\omega \in \widehat{G}^{K\text{-}\mathrm{sph}}}$
is in the direct integral Hilbert space and is $K$-fixed.
Define $v$ by $v=(v_\omega)_{\omega \in \widehat{G}^{K\text{-}\mathrm{sph}}}$.

Let $\mathcal{M}$ be the closed cyclic subspace generated by $v$,
and let $R$ be the orthogonal projection onto $\mathcal M$.
Then $R \in \pi(G)'$.
Since $\pi$ is multiplicity-free, $\pi(G)'$ is commutative.
Furthermore, it contains every diagonal multiplication operator
$M_f$ $(f \in L^\infty(\widehat{G}^{K\text{-}\mathrm{sph}}, m))$.
Hence $R$ commutes with $M_f$, so
$M_f\pi(g)v \in \mathcal{M}$
for any $f \in L^\infty(\widehat{G}^{K\text{-}\mathrm{sph}}, m)$ and $g\in G$.

Let $g \in G$.
If $h \in \mathcal M^\perp$, orthogonality to all these vectors gives
\begin{equation*}
(h_\omega, \pi_\omega(g)v_\omega) = 0
\quad\text{for almost every }\omega.
\end{equation*}
Applying this to a countable dense subset of $G$,
and then using the continuity of each $\pi_\omega$, gives the equality
for any $g\in G$ outside a common null set.
Since each $\pi_\omega$ is irreducible and $v_\omega\ne0$,
we obtain $h_\omega=0$ almost everywhere.
Thus $\mathcal M^\perp=\{0\}$. It follows that $v$ is cyclic.
\end{proof}

\end{document}